\newtheorem{lemma}{Lemma}[section]
\newtheorem{theorem}[lemma]{Theorem}
\newtheorem{remark}[lemma]{Remark}
\newtheorem{proposition}[lemma]{Proposition}
\newtheorem{definition}[lemma]{Definition}
\newtheorem{corollary}[lemma]{Corollary}
\newtheorem{example}[lemma]{Example}
\begin{document}

\title{\textbf{Some characterizations of nilpotent $n$-Lie superalgebras}
\author{ Baoling Guan$^{1,2},$  Liangyun Chen$^{1},$    Ma Yao$^{1}$
 \date{{\small {$^1$ School of Mathematics and Statistics, Northeast Normal
 University,\\
Changchun 130024, China}\\{\small {$^2$ College of Sciences, Qiqihar
University, Qiqihar 161006, China}}}}}}

\date{ }
\maketitle
\begin{quotation}
\small\noindent \textbf{Abstract}: The paper studies nilpotent
$n$-Lie superalgebras. More specifically speaking, we first prove
Engel's theorem for $n$-Lie superalgebras.  Second,
we research some properties of nilpotent $n$-Lie superalgebras,
Finally, we give several sufficient conditions that an $n$-Lie
superalgebra is nilpotent.

\noindent{\textbf{Keywords}}:  Nilpotent $n$-Lie superalgebras;
Engel's theorem; $S^{\ast}$ algebra; Frattini subalgebra

\small\noindent \textbf{Mathematics Subject Classification 2000}:
17B45, 17B50
\renewcommand{\thefootnote}{\fnsymbol{footnote}}
 \footnote[0]{Corresponding
author(L. Chen): chenly640@nenu.edu.cn.}
 \footnote[0]{Supported by NNSF
of China (No.11171055),  Natural Science Foundation of Jilin
province (No. 201115006), Scientific Research Foundation for
Returned Scholars
    Ministry of Education of China and the Fundamental Research Funds for the Central Universities(No. 11SSXT146).}
\end{quotation}
\setcounter{section}{0}

\section{Introduction}
The nilpotent theories of many algebras attract more and more
attentions.
 For example: In  {\rm
  \cite{ccglo,cana,mpw}}, the authors study  nilpotent Leibniz $n$-algebras, nilpotent Lie and Leibniz algebras, nilpotent $n$-Lie algebras,
  respectively;
  D. W. Barnes discusses Engel subalgebras of Leibniz algebras in {\rm
  \cite{dowb}, and so on.
 In 2010, the concept of $n$-Lie
superalgebras was introduced by Cantarini, N. and Kac V. G. in {\rm
  \cite{ck}}.  $n$-Lie superalgebras are
generalization of $n$-Lie algebras. As the structural properties of
$n$-Lie superalgebras mostly remains unexplored and motivated by the
investigation on Engel's theorem and nilpotentcy of $n$-Lie
algebras{\rm
  \cite{dwb,cyc,ce,ksh,mpw}
and Lebniz $n$-algebras {\rm
  \cite{aaot,ccglo,ckl,glot}}, it is natural to ask about the extension of these properties to the $n$-Lie superalgebras category. As is well known, for $n$-Lie algebras and Leibniz $n$-algebras,
Engel's theorem and nilpotentcy play a predominant role in Lie
theory. Analogously,
 Engel's Theorem and nilpotentcy for $n$-Lie superalgebras will also play an important
role in Lie theory.

The goal of the present paper is to study Engel's theorem and
nilpotentcy for $n$-Lie superalgebras.
 We first prove
Engel's theorem for $n$-Lie superalgebras, which will generalize
Engel's theorems for $n$-Lie algebras and Lie superalgebras, then we
research some properties of nilpotent $n$-Lie superalgebras,
moreover, we give several sufficient conditions that an $n$-Lie
superalgebra is nilpotent.

\begin{definition}{\rm \cite{ck}}\label{d1.1}\,  An $n$-Lie superalgebra is an anti-commutative $n$-superalgebra $A$ of parity $\alpha,$
such that all endomorphisms $D(a_{1},\cdots,a_{n-1})$ of
$A(a_{1},\cdots,a_{n-1}\in A),$ defined by
           $$D(a_{1},\cdots,a_{n-1})(a_{n})=[a_{1},\cdots,a_{n-1},a_{n}],$$
are derivations of $A,$ i.e., the following Filippov-Jacobi identity
holds:
\begin{eqnarray*}&& [a_{1},\cdots,a_{n-1},[b_{1},\cdots,b_{n}]] =(-1)^{\alpha(p(a_{1})+\cdots+p(a_{n-1}))}([[a_{1},\cdots,a_{n-1},b_{1}],b_{2},\cdots,b_{n}]\\
&&\quad+(-1)^{p(b_{1})(p(a_{1})+\cdots+p(a_{n-1}))}[b_{1},[a_{1},\cdots,a_{n-1},b_{2}],b_{3},\cdots,b_{n}]+\cdots\\
&&\quad+(-1)^{(p(b_{1})+\cdots+p(b_{n-1}))(p(a_{1})+\cdots+p(a_{n-1}))}[b_{1},\cdots,b_{n-1},[a_{1},\cdots,a_{n-1},b_{n}]]).
\end{eqnarray*}
\end{definition}

From the above definition, we may see that
$p([a_{1},\cdots,a_{n}])=\alpha+\sum\limits_{i=1}^{n}p(a_{i})$ and $
[a_{1},\cdots,a_{i},a_{i+1},\cdots,a_{n}]=-(-1)^{p(a_{i})p(a_{i+1})}
[a_{1},\cdots,a_{i+1},a_{i},\cdots,a_{n}], \forall a_{i}\in A(1\leq
i\leq n),$ where $p([a_{1},\cdots,a_{n}])$ and $p(a_{i})$ denote the
degrees of $[a_{1},\cdots,a_{n}]$ and $a_{i}$, respectively.
Moreover, since $n$-Lie superalgebra $A$ is related to $\alpha,$ it
is also denoted by $(A,\alpha).$

Analogous to $n$-Lie algebras(\rm\cite{ksh}), we have the following
definition:

\begin{definition}  Let $A=A_{\bar{0}}\oplus A_{\bar{1}}$ be an $n$-Lie superalgebra and $I$ be a subspace of $A.$

$\mathrm{(i)}$ $I$ is called a vector superspace, if
$I=I_{\bar{0}}\oplus I_{\bar{1}},$ where $I_{\bar{0}}=I\cap
A_{\bar{0}},$ $I_{\bar{1}}=I\cap A_{\bar{1}};$

$\mathrm{(ii)}$ A vector superspace $I\subseteq A$ is called a
subalgebra, if $[I,I,\cdots,I,I]\subseteq I;$

 $\mathrm{(iii)}$ A vector superspace $I\subseteq A$ is called an ideal($I\lhd A$), if $[A,A,\cdots,A,I]\subseteq I;$

 $\mathrm{(iv)}$ A vector superspace $I\subseteq A$ is called a weak ideal, if $[A,I,\cdots,I,I]\subseteq I;$

 $\mathrm{(v)}$ An ideal $I$ is called abelian, if $[A,A,\cdots,A,I,I]=0;$

 $\mathrm{(vi)}$ An ideal $I$ of an algebra $A$ is called nilpotent, if $I^{v}=0$ for some $v\geq 0,$ where $I^{1}=I, I^{s+1}=[A,\cdots,A,I,I^{s}].$

 In sequel, Let $\mathbb{F}$ be an arbitrary field and $A$ be a finite dimensional $n$-Lie superalgebra over a field
 $\mathbb{F}.$
\end{definition}
\section{Engel's theorem of $n$-Lie superalgebras}
\begin{definition}\label{d1.3} Let $A=A_{\bar{0}}\oplus A_{\bar{1}}$ be an $n$-Lie superalgebra over a field $\mathbb{F}.$
A vector superspace $V$ over $\mathbb{F}$ is called an $A$-module if
there is defined on the direct sum of vector space $V\oplus A=B$ the
structure of an $n$-Lie superalgebra such that $A$ is a subalgebra
of $B$ and $V$ is an abelian ideal of $B.$
\end{definition}

\begin{definition} \label{d1.4} Let $A=A_{\bar{0}}\oplus A_{\bar{1}}$ be a vector superspace over a field $\mathbb{F}$ and $(A,\alpha)$
 be an $n$-Lie superalgebra over $\mathbb{F}.$ We define a
 multilinear mapping
$\rho: A^{\times(n-1)}=\overbrace{A\times A\times\cdots\times
A}^{n-1}\rightarrow\mathrm{End}V,
(x_{1},x_{2},\cdots,x_{n-1})\mapsto\rho(x_{1},\cdots,x_{n-1}).$ Then
$\rho$ is called a representation and $V$ is called an $A$-module,
if the following relations are satisfied:
\begin{equation} \label{e1.1} \mathrm{(1)}\ \rho(a_{1},\cdots,a_{i},a_{i+1},\cdots,a_{n-1})=-(-1)^{p(a_{i})p(a_{i+1})}
 \rho(a_{1},\cdots,a_{i+1},a_{i},\cdots,a_{n-1}),a_{i}\in A.\quad\quad
   \end{equation}
\begin{equation} \label{e1.2}\mathrm{(2)}\rho(b)\rho(a)=(-1)^{p(a)(p(b)+\alpha)}\rho(a)\rho(b)+\sum_{i=1}^{n-1}(-1)^{p(b)(\sum\limits_{j=1}^{i-1}p(a_{j})+\alpha)}
        \rho(a_{1},\cdots,D(b)(a_{i}),\cdots,a_{n-1}),
   \end{equation}
 where  $a=(a_{1},\cdots,a_{n-1}), b=(b_{1},\cdots,b_{n-1}),a_{i}, b_{i}\in A.$
\begin{equation} \label{e1.33}\mathrm{(3)}  \rho(a_{1},\cdots,a_{n-2},[b_{1},\cdots,b_{n}])(c)=
       \sum_{i=1}^{n} \lambda_{i}\rho(b_{1},\cdots,\hat{b_{i}},\cdots,b_{n})\rho(a_{1},\cdots,a_{n-2},b_{i})(c),\quad\quad\quad
\end{equation}
where $\lambda_{i}=(-1)^{n-i}(-1)^{p(a)(\sum\limits_{j=1 j\neq
i}^{n}p(b_{j}))+(p(b_{i})+\alpha)(\sum\limits_{j=i+1}^{n}p(b_{j}))}(-1)^{\alpha(p(a_{1})+p(a_{2})+\cdots+p(a_{n-2}))},$
\\ $p(a)=\sum\limits_{i=1}^{n-2}p(a_{i}),$ $\hat{b_{i}}$ denotes $b_{i}$ is
omitted, and $a_{i}, b_{i}, c\in A.$\\
 $\mathrm{(4)}
\rho(a)(V_{\theta})\subseteq V_{\theta+\beta},$ where
$a=(a_{1},\cdots,a_{n-1}),\theta\in\mathbb{Z}_{2},
\beta=p(a)=\sum\limits_{i=1}^{n-1}p(a_{i}), a_{i}\in A.$
\end{definition}

\begin{remark}\label{r1.1} Definition $\ref{d1.4}$ is equivalent to Definition $\ref{d1.3}.$
Definition $\ref{d1.4}$ can conclude Definition $\ref{d1.3}.$ In
fact, let $\rho$ be a representation of $A$ and $V$ be an
$A$-module. Then $\rho$ is a linear transformation on $V.$  We can
define on the direct sum of linear spaces $V\oplus A$ a
skew-super-symmetric $n$-ary operator:
$$[x_{1},\cdots,x_{n-2},v_{1},v_{2}]:=0, [x_{1},\cdots,x_{n-1},v]=\rho(x_{1},\cdots,x_{n-1})(v)\in V,$$ where $ x_{1},\cdots,x_{n-2}\in A,v_{1},v_{2},v \in V.$
For $ x_{1},\cdots,x_{n-1}, y_{1},\cdots,y_{n-1}\in A, v\in V,$ by
$(\ref{e1.1})$, we have
\begin{eqnarray*}&& [x_{1},\cdots,x_{n-1},[y_{1},\cdots,y_{n-1},v]]=\rho(x)\rho(y)(v)= (-1)^{p(y)(p(x)+\alpha)}\rho(y)\rho(x)(v)\\
&&\quad+\sum\limits_{i=1}^{n-1}(-1)^{p(x)(\sum\limits_{j=1}^{i-1}p(y_{j})+\alpha)}\rho(y_{1},\cdots,D(x)(y_{i}),\cdots,y_{n-1})(v)\\
&&= (-1)^{p(x)(p(y)+\alpha)}[y_{1},\cdots,y_{n-1},[x_{1},\cdots,x_{n-1},v]]\\
&&\quad+(-1)^{p(x)\alpha}[[x_{1},\cdots,x_{n-1},y_{1}], y_{2},\cdots,y_{n-1},v]\\
&&\quad+(-1)^{p(x)(p(y_{1})+\alpha)}[y_{1},[x_{1},\cdots,x_{n-1},y_{2}], y_{3},\cdots,y_{n-1},v]\\
&&\quad+(-1)^{p(x)(p(y_{1})+p(y_{2})+\alpha)}[y_{1},y_{2},[x_{1},\cdots,x_{n-1},y_{3}], y_{4},\cdots,y_{n-1},v]\\
&&\quad+\cdots+(-1)^{p(x)(p(y_{1})+\cdots+p(y_{n-2})+\alpha)}[y_{1},\cdots,y_{n-2},[x_{1},\cdots,x_{n-1},y_{n-1}],v]\\
&&= (-1)^{p(x)\alpha}([[x_{1},\cdots x_{n-1},y_{1}],y_{2},\cdots,y_{n-1},v]\\
&&\quad+(-1)^{p(x)p(y_{1})}[y_{1},[x_{1},\cdots,x_{n-1},y_{2}], y_{3},\cdots,y_{n-1},v]\\
&&\quad+(-1)^{p(x)(p(y_{1})+p(y_{2}))}[y_{1},y_{2},[x_{1},\cdots,x_{n-1},y_{3}], y_{4},\cdots,y_{n-1},v]\\
&&\quad+\cdots+(-1)^{p(x)(p(y_{1})+\cdots+p(y_{n-2}))}[y_{1},\cdots,y_{n-2},[x_{1},\cdots,x_{n-1},y_{n-1}],v]\\
&&\quad+(-1)^{p(x)p(y)}[y_{1},\cdots,y_{n-1},[x_{1},\cdots,x_{n-1},v]]),
\end{eqnarray*}
where $p(x)=\sum\limits_{i=1}^{n-1}p(x_{i}),
p(y)=\sum\limits_{i=1}^{n-1}p(y_{i}),$ that is, the above formula
satisfies Filippov-Jacobi identity. Hence $V\oplus A$ is an $n$-Lie
superalgebra on the above operator such that $A$ is a subalgebra of
$V\oplus A$ and $V$ is an abelian ideal of $V\oplus A$.

Definition $\ref{d1.3}$ can also conclude Definition $\ref{d1.4}.$
In fact, for any $a_{1},\cdots,a_{n-1}\in A,$ there is a
corresponding linear transformation $\rho(a_{1},\cdots, a_{n-1})$ of
$V,$ where $\rho(a_{1},\cdots, a_{n-1})(v)=[a_{1},\cdots, a_{n-1},
v].$ Then the operators $\rho(a)$ satisfy the formulas
$(\ref{e1.1}),(\ref{e1.2})$ and $(\ref{e1.33}).$
 In fact,  it is clear that $(\ref{e1.1})$ holds.
\begin{eqnarray*}&& \rho(b)\rho(a)(c)= [b_{1},\cdots,b_{n-1},[a_{1},\cdots,a_{n-1},c]]\\
&&= (-1)^{\alpha p(b)}\{[[b_{1},\cdots,b_{n-1},a_{1}],a_{2},\cdots,a_{n-1},c]\\
&&\quad+(-1)^{p(b)p(a_{1})}[a_{1},[b_{1},\cdots,b_{n-1},a_{2}], a_{3},\cdots,a_{n-1},c]\\
&&\quad+(-1)^{p(b)(p(a_{1})+p(a_{2}))}[a_{1},a_{2},[b_{1},\cdots,b_{n-1},a_{3}], a_{4},\cdots,a_{n-1},c]\\
&&\quad+\cdots+(-1)^{p(b)(p(a_{1})+\cdots+p(a_{n-2}))}[a_{1},\cdots,a_{n-2},[b_{1},\cdots,b_{n-1},a_{n-1}],c]\\
&&\quad+(-1)^{p(b)(p(a_{1})+\cdots+p(a_{n-1}))}[a_{1},\cdots,a_{n-1},[b_{1},\cdots,b_{n-1},c]]\}\\
&&= (-1)^{p(b)\alpha}\rho(D(b)(a_{1}),a_{2},\cdots, a_{n-1})(c)\\
&&\quad+(-1)^{p(b)(p(a_{1})+\alpha)}\rho(a_{1},D(b)(a_{2}),a_{3},\cdots, a_{n-1})(c)\\
&&\quad+(-1)^{p(b)(p(a_{1})+p(a_{2})+\alpha)}\rho(a_{1},a_{2},D(b)(a_{3}),a_{4},\cdots, a_{n-1})(c)\\
&&\quad+\cdots+(-1)^{p(b)(p(a_{1})+\cdots+p(a_{n-2})+\alpha)}\rho(a_{1},\cdots, a_{n-2},D(b)(a_{n-1}))(c)\\
&&\quad+(-1)^{p(b)(p(a)+\alpha)}\rho(a)\rho(b)(c)\\
&&=(-1)^{p(b)(p(a)+\alpha)}\rho(a)\rho(b)(c)+\sum_{i=1}^{n-1}(-1)^{p(b)(\sum\limits_{j=1}^{i-1}p(a_{j})+\alpha)}
        \rho(a_{1},\cdots,D(b)(a_{i}),\cdots,a_{n-1})(c),
\end{eqnarray*}
where $D(b)=D(b_{1},\cdots,b_{n-1}),$
 that is, $(\ref{e1.2})$ holds.
\begin{eqnarray*}&& (-1)^{\alpha(p(a_{1})+p(a_{2})+\cdots+p(a_{n-2})+p(c))}\rho(a_{1},\cdots,a_{n-2},[b_{1},\cdots,b_{n}])(c)\\
&&= (-1)^{\alpha(p(a_{1})+p(a_{2})+\cdots+p(a_{n-2})+p(c))}[a_{1},\cdots,a_{n-2},[b_{1},\cdots,b_{n}],c]\\
&&= (-1)^{\alpha(p(a_{1})+p(a_{2})+\cdots+p(a_{n-2})+p(c))}(-(-1)^{p(c)(p(b_{1})+\cdots+p(b_{n})+\alpha)}[a_{1},\cdots,a_{n-2},c,[b_{1},\cdots,b_{n}]])\\
&&=-(-1)^{p(c)(p(b_{1})+\cdots+p(b_{n})+\alpha)}\{[[a_{1},\cdots,a_{n-2},c,b_{1}], b_{2},\cdots,b_{n}]\\
&&\quad+(-1)^{p(b_{1})(p(a_{1})+\cdots+p(a_{n-2})+p(c))}[b_{1},[a_{1},\cdots,a_{n-2},c,b_{2}], b_{3},\cdots,b_{n}]\\
&&\quad+\cdots+(-1)^{(p(b_{1})+\cdots+p(b_{n-1}))(p(a_{1})+\cdots+p(a_{n-2})+p(c))}
[b_{1},\cdots,b_{n-1},[a_{1},\cdots,a_{n-2},c,b_{n}]]\}\\
&&=-(-1)^{p(c)(p(b_{1})+\cdots+p(b_{n})+\alpha)}\{(-1)^{n+(p(a_{1})+\cdots+p(a_{n-2})+p(c)+p(b_{1})+\alpha)(p(b_{2})+\cdots+p(b_{n}))+p(b_{1})p(c)}\\
&&\quad\quad\quad.[b_{2},\cdots,b_{n},[a_{1},\cdots, a_{n-2},b_{1},c]]\\
&&\quad+(-1)^{n-1+p(b_{1})(p(a_{1})+\cdots+p(a_{n-2})+p(c))+(p(a_{1})+\cdots+p(a_{n-2})+p(c)+p(b_{2})+\alpha)(p(b_{3})+\cdots+p(b_{n}))+p(b_{2})p(c)}\\
&&\quad\quad\quad.[b_{1},b_{3},\cdots,b_{n},[a_{1},\cdots, a_{n-2},b_{2},c]]\\
&&\quad+\cdots+(-1)^{1+(p(b_{1})+\cdots+p(b_{n-1}))(p(a_{1})+\cdots+p(a_{n-2})+p(c))+p(b_{n})p(c)}\\
&&\quad\quad\quad.[b_{1},\cdots,b_{n-1},[a_{1},\cdots, a_{n-2},b_{n},c]]\}\\
&&=(-1)^{n+1+\alpha p(c)+(p(a_{1})+\cdots+p(a_{n-2}))(p(b_{2})+\cdots+p(b_{n}))+(p(b_{1})+\alpha)(p(b_{2})+\cdots+p(b_{n}))}\\
&&\quad\quad\quad.\rho(b_{2},\cdots,b_{n})\rho(a_{1},\cdots, a_{n-2},b_{1})(c)\\
&&\quad+(-1)^{n+\alpha p(c)+(p(a_{1})+\cdots+p(a_{n-2}))(p(b_{1})+p(b_{3})+\cdots+p(b_{n}))+(p(b_{2})+\alpha)(p(b_{3})+\cdots+p(b_{n}))}\\
&&\quad\quad\quad
.\rho(b_{1},b_{3},\cdots,b_{n})\rho(a_{1},\cdots, a_{n-2},b_{2})(c)\\
&&\quad+(-1)^{n-1+\alpha
p(c)+(p(a_{1})+\cdots+p(a_{n-2}))(p(b_{1})+p(b_{2})+p(b_{4})+\cdots+p(b_{n}))+(p(b_{3})+\alpha)(p(b_{4})+\cdots+p(b_{n}))}
\\&&\quad\quad\quad .\rho(b_{1},b_{2},b_{4},\cdots,b_{n})\rho(a_{1},\cdots, a_{n-2},b_{3})(c)\\
&&\quad+\cdots+(-1)^{2+\alpha
p(c)+(p(a_{1})+\cdots+p(a_{n-2}))(p(b_{1})+\cdots+p(b_{n-1}))}
.\rho(b_{1},\cdots,b_{n-1})\rho(a_{1},\cdots, a_{n-2},b_{n})(c)\\
&&=\sum_{i=1}^{n} (-1)^{n-i+\alpha
p(c)}(-1)^{p(\sum\limits_{j=1}^{n-2}p(a_{j}))(\sum\limits_{j=1 j\neq
i}^{n}p(b_{j}))+(p(b_{i})+\alpha)(\sum\limits_{j=i+1}^{n}p(b_{j}))}\\
&&\quad\quad\quad.\rho(b_{1},\cdots,\hat{b_{i}},\cdots,b_{n})\rho(a_{1},\cdots,a_{n-2},b_{i})(c),
\end{eqnarray*}
that is, $(\ref{e1.33})$ holds.
\end{remark}

A special case of the representation is the regular representation
$a \mapsto D(a),$ where $D(a)=D(a_{1},\cdots,a_{n-1}),$
$D(a)(a_{n})=[a_{1},\cdots,a_{n-1},a_{n}], a_{i}\in A.$ The subspace
$\mathrm{ker}\rho=\{x\in A|\rho(A,\cdots,A,x)=0\}$ is called the
kernel of the representation $\rho.$ It follows from $(\ref{e1.1})$
that $\mathrm{ker}\rho\lhd A.$ If $\mathrm{ker}\rho=0,$ then the
representation $\rho$ is called faithful. A subset $S\subseteq A$
will be called homogeneous multiplicatively closed(h.m.c.), if for
any $x,x_{1},\cdots,x_{n}\in S, \lambda\in \mathbb{F},$ we have
$\lambda x\in S, [x_{1},\cdots,x_{n}]\in S.$ We denote the linear
span of a h.m.c. set $S$ by $F(S),$ it is clear that $F(S)$ is equal
to the subalgebra generated by the set $S.$\

\begin{theorem} \label{t1.2} Suppose that $\rho$ is a representation on an $n$-Lie superalgebra $A$ in a finite-dimensional space $V,$  $S$ is a h.m.c. subset of $A$ and the operators $\rho(a_{1},\cdots,a_{n-1})$ are nilpotent for any $a_{1},\cdots,a_{n-1}\in S.$ Then the algebra $S_{\rho}^{\ast}$ generated by these operators
is nilpotent. In addition, If the representation $\rho$ is faithful,
the algebra $F(S)$ is also nilpotent and acts nilpotently on $A.$
\end{theorem}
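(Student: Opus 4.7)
The plan is to prove that the associative subalgebra $S^{\ast}_{\rho}\subseteq\mathrm{End}(V)$ is nilpotent by induction on $\dim V$, then deduce the statements about $F(S)$ from the first part. Let $\mathcal{F}=\{\rho(s_{1},\ldots,s_{n-1}):s_{i}\in S\}$, so $S^{\ast}_{\rho}$ is the associative algebra generated by $\mathcal{F}$. The key structural input is identity $(\ref{e1.2})$: for tuples $a,b$ with entries in $S$ it expresses $\rho(b)\rho(a)$ as a signed multiple of $\rho(a)\rho(b)$ plus a sum of single operators of the form $\rho(a_{1},\ldots,D(b)(a_{i}),\ldots,a_{n-1})$. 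Because $S$ is h.m.c., each $D(b)(a_{i})=[b_{1},\ldots,b_{n-1},a_{i}]$ again lies in $S$, so every operator produced by such a ``commutator reduction'' still belongs to $\mathcal{F}$. This closure is what makes an Engel-type induction possible.

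The main step is to produce a nonzero common null vector, that is, to show $W:=\bigcap_{T\in\mathcal{F}}\ker T\neq 0$. I would establish this by a secondary induction on the size of a finite generating subfamily $\mathcal{F}_{0}\subseteq\mathcal{F}$, following the Jacobson adaptation of Engel's theorem: pick $T_{0}\in\mathcal{F}_{0}$, use its nilpotence to get $\ker T_{0}\neq 0$, and invoke $(\ref{e1.2})$ together with the h.m.c. property to check that the remaining generators stabilise a suitable invariant subspace in which the induction hypothesis applies. The $\mathbb{Z}_{2}$-grading enters only through the sign factors in $(\ref{e1.2})$, which do not affect kernels; tracking the parities on the nose is the one bookkeeping task, but it is formal because formula $(4)$ of Definition~\ref{d1.4} guarantees that every operator in $\mathcal{F}$ is homogeneous.

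Granting $W\neq 0$, I would check, again using $(\ref{e1.2})$, that $W$ is invariant under every $\rho(b)$ with $b$ a tuple from $S$ (the defining annihilation conditions are preserved because the replacement tuples stay in $S$); hence $W$ is an $S^{\ast}_{\rho}$-submodule. If $W=V$ then $S^{\ast}_{\rho}=0$; otherwise the induced representation on $V/W$ has dimension $<\dim V$ and satisfies the same hypotheses, so by induction $(S^{\ast}_{\rho})^{N}V\subseteq W$ for some $N$, and since $S^{\ast}_{\rho}W=0$ we obtain $(S^{\ast}_{\rho})^{N+1}=0$.

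For the ``in addition'' part, assume $\rho$ is faithful. The regular representation identity $\rho(a_{1},\ldots,a_{n-2},[b_{1},\ldots,b_{n}])$ of $(\ref{e1.33})$ shows that $\rho$ sends iterated brackets in $F(S)$ to products in $S^{\ast}_{\rho}$; nilpotence $(S^{\ast}_{\rho})^{k}=0$ therefore forces every sufficiently long bracketed expression $F(S)^{v}$ to lie in $\ker\rho=0$, giving $F(S)$ nilpotent as an $n$-Lie superalgebra. Finally, to see that $F(S)$ acts nilpotently on $A$, apply the first half of the theorem already proved to the regular representation $D$ with the same h.m.c. set $S$: the operators $D(s_{1},\ldots,s_{n-1})$ are nilpotent (either by hypothesis if $\rho=D$, or by noting that faithfulness plus $(S^{\ast}_{\rho})^{k}=0$ forces nilpotence of $D(s_{1},\ldots,s_{n-1})$ as well), so the associated enveloping algebra on $A$ is nilpotent. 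The main obstacle throughout is the first step, namely the existence of the common null vector under only a h.m.c. hypothesis rather than an ideal hypothesis; the argument works precisely because $(\ref{e1.2})$ and the h.m.c. property conspire to keep all intermediate operators inside $\mathcal{F}$.
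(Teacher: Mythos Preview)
Your approach differs from the paper's and has a genuine gap at precisely the step you flag as the main obstacle. The Jacobson weakly-closed-sets argument you invoke requires that for $T_1,T_2\in\mathcal F$ some ``commutator'' $T_1T_2-\gamma\,T_2T_1$ lie in $\mathcal F$ as a \emph{single} element; identity $(\ref{e1.2})$ instead yields a sum of $n-1$ elements of $\mathcal F$, so only $\mathrm{span}(\mathcal F)$ is closed under the supercommutator. To run an Engel argument on that span you would need every linear combination of the $\rho(s_1,\ldots,s_{n-1})$ to be nilpotent, which is not part of the hypothesis. Your ``secondary induction on the size of a finite generating subfamily $\mathcal F_0$'' does not close either: the commutator of two elements of $\mathcal F_0$ produces new operators lying in $\mathcal F$ but not in $\mathcal F_0$, so there is no codimension-one reduction step available. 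The paper circumvents this by a completely different induction: it takes a maximal h.m.c.\ subset $X\subseteq S$ with $X_\rho^*$ nilpotent, picks $b\in S\setminus X$ with $[X,\ldots,X,b]\subseteq X$, and views the operators $\rho(X,b)$ as arising from the $(n-1)$-Lie superalgebra $A_b$ with bracket $[a_1,\ldots,a_{n-1}]_b=[a_1,\ldots,a_{n-1},b]$, thereby inducting on the arity $n$ down to the base case $n=2$, where the algebra is generated by the single nilpotent operator $\rho(b)$.

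Your handling of the ``in addition'' clause is also incomplete. Nilpotence of $S_\rho^*$ controls products of $\rho$-operators on $V$; it says nothing directly about $D$-operators on $A$, so you cannot re-apply the first half to the regular representation without already knowing that each $D(s_1,\ldots,s_{n-1})$ is nilpotent, which is not assumed. The paper bridges this gap explicitly: with $C=F(X)$, $C_0=A$, $C_{i+1}=[C,\ldots,C,C_i]$, it proves inclusions $(\ref{e1.3})$--$(\ref{e1.4})$ culminating in $\rho(A,C_k)\subseteq\rho^k(C)\rho(A)+\sum_{i+j=k-1}\rho^i(C)\rho(C,A)\rho^j(C)\rho(A,C)$, so that $(X_\rho^*)^s=0$ forces $\rho(A,C_k)=0$ for $k\geq 2s$ and hence $C_k\subseteq\ker\rho=0$ by faithfulness. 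This derivation of nilpotent action on $A$ from nilpotence of $X_\rho^*$ has no counterpart in your sketch.
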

\begin{proof}
Passing to the quotient algebra $A/\mathrm{ker}\rho,$ we may assume
with no loss of generality that $\rho$ is faithful. To any subset
$X\subseteq S,$ we associate the subalgebra $X_{\rho}^{\ast}\leq
A_{\rho}^{\ast}$ generated by these operators
$\rho(a_{1},\cdots,a_{n-1}), a_{i}\in X.$ Suppose $X$ is a maximal
h.m.c. subset of $S$ for which the corresponding algebra
$X_{\rho}^{\ast}$ is nilpotent. Our aim is to prove that $X=S.$

Suppose $(X_{\rho}^{\ast})^{s}=0.$ Put $C=F(X),$ $C_{0}=A,$
$C_{i+1}=[C,\cdots,C,C_{i}]$ for $i\geq 0.$ We introduce an
abbreviated notation for certain subspaces of $A_{\rho}^{\ast}:$
$$\rho(A,\cdots,A,C_{i})=\rho(A,C_{i}),\rho(C,\cdots,C,A)=\rho(C,A),\rho(C,\cdots,C)=\rho(C),$$ etc.
We will show by induction on $k$ that for any $k\geq 0,$
 we have
\begin{equation} \label{e1.3}\rho(C,C_{k})\subseteq
\sum_{i=0}^{k}\rho^{i}(C)\rho(C,A)\rho^{k-i}(C).
\end{equation}
In fact, it follows from $(\ref{e1.2})$ that
$$\rho(C,C_{k+1})=\rho(C,[C,\cdots,C,C_{k}])\subseteq \rho(C,C_{k})\rho(C)+\rho(C)\rho(C,C_{k}).$$
This enables us to complete the inductive passage from $k$ to $k+1$ in relation $(\ref{e1.3}),$ which is trivial for $k=0.$
It follows from $(\ref{e1.2})$ that
\begin{equation} \label{e1.4}\rho(A,C_{k+1})=\rho(A,[C,\cdots,C,C_{k}])\subseteq
\rho(C,C_{k})\rho(A,C)+\rho(C)\rho(A,C_{k}).
\end{equation}
Again using induction on $k$ and $(\ref{e1.3}),$ we see that for $k\geq 1$
$$\rho(A,C_{k})\subseteq \rho^{k}(C)\rho(A)+\sum_{i+j=k-1}\rho^{i}(C)\rho(C,A)\rho^{j}(C)\rho(A,C). $$
Since $\rho^{s}(C)=0,$ for $k\geq 2s$ we obtain $\rho(A,C_{k})=0,$ i.e., $C_{k}\subseteq \mathrm{ker}\rho,$ hence $C_{k}=0.$
This means that $C$ acts nilpotently on $A$ by left multiplications, in particular, the algebra $C$ is itself nilpotent.

If $S\neq X,$ it follows easily from the preceding that $S\backslash X$ contains an element $b$ such that
\begin{equation} \label{e1.5}[X,\cdots,X,b]\subseteq X.
\end{equation}
Then $Y=\mathbb{F}b\cup X$ is  a h.m.c. subset of $S$ strictly
containing $X.$ We will show that the algebra $Y_{\rho}^{\ast}$ is
nilpotent, which is contrary to the maximality of $X.$ Any element
of $\rho(Y)$ lies either in  $\rho(X)$ or in $\rho(X,b).$ Suppose
$U\in \rho(Y)^{m}, m>0.$ If in the word $U$ the operators in
$\rho(X)$ occur at least $s$ times, in view of $(\ref{e1.1})$ and
$(\ref{e1.5}),$ then $U$ can be transformed into a sum of words in
which the operators in $\rho(X)$ appear consecutively and the number
of them is at least $s,$ therefore $U=0.$

On the other hand, if in $U$ the operators in $\rho(X)$ occur $l\leq
s-1$ times, then $U$ has the form $U_{1}\rho_{1}U_{2}\rho_{2}\cdots
U_{l}\rho_{l}U_{l+1},$ where $\rho_{i}\in \rho(X),$ $U_{i}$ are
products of elements $\rho(X,b),$ and some of the words $U_{i}$ can
be empty.

Let us view $A$ as an $(n-1)$-Lie superalgebra $A_{b}$ with
operation
$$[a_{1},\cdots,a_{n-1}]_{b}=[a_{1},\cdots,a_{n-1},b]$$ and $V$ as
an $A_{b}$-module on which there acts the representation
$\tilde{\rho}$ of the algebra $A_{b}:$
 $\tilde{\rho}(a_{1},\cdots,a_{n-2})=\rho(a_{1},\cdots,a_{n-2},b).$ It follows from $(\ref{e1.5})$ that $X$ is
 a h.m.c. set in $A_{b}.$ Since the operators in
 $\tilde{\rho}(X)=\rho(X,b)$ are nilpotent, the induction assumption with respect to $n$ is applicable to the
triple $(A_{b},X,\tilde{\rho})$ and the algebra
$X_{\tilde{\rho}}^{\ast}$ is nilpotent,
 say of index $t.$ When $n=2,$ since the algebra $X_{\tilde{\rho}}^{\ast}$ is generated by the nilpotent operator $\rho(b),$  $X_{\tilde{\rho}}^{\ast}$ is nilpotent,
  which provides the basis for the induction.

Therefore $U_{i}=0,i=1,\cdots,l+1,$ if the $\rho$-length of $U_{i}=0$ is greater than or equal to $t.$ Consequently, when $m\geq st$ all words  $U\in \rho(Y)^{m}$
are zero, i.e., $(Y_{\rho}^{\ast})^{st}=0$ as required. This contradiction shows that $X=S.$ The second assertion of the theorem has already been proved,
since $C=F(X)=F(S).$
\end{proof}

\begin{theorem}(Engel's Theorem) \label{t1.1} Suppose $A$ is a finite-dimensional $n$-Lie superalgebra in which
all left multiplication operators $D(a)$ are nilpotent, where
$D(a)=D(a_{1},\cdots,a_{n-1}),$ $ a_{i}\in A(1\leq i\leq n-1).$
Then $A$ is nilpotent.
\end{theorem}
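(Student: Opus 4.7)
The plan is to deduce Engel's theorem as a direct corollary of Theorem~\ref{t1.2} applied to the regular representation. Take $\rho = D$ (the regular representation of $A$ on the vector space $V = A$) and let $S = A$ itself serve as the h.m.c.\ subset; this is trivially h.m.c., since $A$ is closed under scalar multiplication and the $n$-bracket. The hypothesis of Engel's theorem states precisely that $\rho(a_1,\ldots,a_{n-1}) = D(a_1,\ldots,a_{n-1})$ is nilpotent for every $a_1,\ldots,a_{n-1}\in S$, which matches the hypothesis of Theorem~\ref{t1.2}.

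If $\rho$ happens to be faithful, Theorem~\ref{t1.2} gives immediately that $F(S) = A$ is nilpotent and we are done. In general, $\ker\rho = \{x\in A : [A,\ldots,A,x]=0\}$ is a super-ideal of $A$ by the anticommutativity relation~(\ref{e1.1}), so we form the quotient $\bar A = A/\ker\rho$, which inherits an $n$-Lie superalgebra structure. Its regular representation $\bar D$ is now faithful; moreover each $D(a_1,\ldots,a_{n-1})$ descends to an operator $\bar D(\bar a_1,\ldots,\bar a_{n-1})$ on $\bar A$, and passage to a quotient preserves nilpotency. Consequently the hypotheses of Theorem~\ref{t1.2} still hold for the triple $(\bar A, \bar A, \bar D)$, and applying that theorem yields that $\bar A$ is nilpotent.

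Finally, we lift nilpotency from $\bar A$ back to $A$: if $\bar A^{\,k} = 0$, then $A^k \subseteq \ker\rho$, and since $[A,\ldots,A,\ker\rho] = 0$ by the very definition of $\ker\rho$ we obtain
\[
A^{k+1} \;=\; [A,\ldots,A,A^k] \;\subseteq\; [A,\ldots,A,\ker\rho] \;=\; 0,
\]
so $A$ is nilpotent. Essentially all of the work has already been absorbed into Theorem~\ref{t1.2} (where the genuine difficulty — an induction on $n$ together with the $X \mapsto \mathbb{F}b\cup X$ enlargement argument — was carried out), and the only mild issue here is the quotient step: one must verify that $\ker\rho$ is genuinely a super-ideal, that $\bar A$ inherits an $n$-Lie superalgebra structure, and that nilpotency of the induced operators on $\bar A$ is preserved. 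All three points are routine consequences of~(\ref{e1.1}), so no serious obstacle arises.
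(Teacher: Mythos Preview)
Your approach is the same as the paper's: set $\rho$ to be the regular representation, $V=A$, $S=A$, and invoke Theorem~\ref{t1.2}. The paper's proof is literally one line to that effect.

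However, your detour through the quotient contains a slip. You assert that the regular representation $\bar D$ of $\bar A = A/\ker\rho$ is faithful, but this is false in general: $\ker\rho$ is the centre $Z(A)$, and the kernel of the regular representation of $A/Z(A)$ is $Z_2(A)/Z(A)$, which need not vanish (take the three-dimensional Heisenberg Lie algebra, where $A/Z(A)$ is abelian and hence has \emph{everything} in its centre). So the step ``$\bar D$ is now faithful $\Rightarrow$ Theorem~\ref{t1.2} gives $\bar A$ nilpotent'' does not go through as written.

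The fix is to notice that the whole detour is unnecessary. The \emph{first} conclusion of Theorem~\ref{t1.2} --- that the associative algebra $S_\rho^{\ast}$ generated by the operators $\rho(a_1,\dots,a_{n-1})$ is nilpotent --- requires no faithfulness hypothesis at all. With $\rho=D$ and $S=A$ this says there is some $k$ with $D(a^{(1)})\cdots D(a^{(k)})=0$ for all choices of tuples $a^{(i)}\in A^{n-1}$; since $A^{s+1}=[A,\dots,A,A^{s}]=D(A,\dots,A)(A^{s})$, this immediately yields $A^{k+1}=0$. That is what the paper's one-line proof is implicitly using. (Alternatively you could repair your argument by iterating the quotient, or by letting $\bar A$ act on the original $V=A$ rather than on $\bar A$ --- that induced representation \emph{is} faithful by construction --- but the direct route via $S_\rho^{\ast}$ is cleaner.)
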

\begin{proof}
Let $\rho$ be the regular representation and $A=V=S.$ By Theorem
$\ref{t1.2},$ we may obtain $A$ is nilpotent.
\end{proof}

\section{Nilpotency of $n$-Lie superalgebras}

\begin{definition} \label{d2.2}  The Frattini subalgebra, $F(A)$, of $A$ is the intersection of all maximal
subalgebra of $A.$  The maximal ideal of $A$ contained in  $F(A)$ is
denoted by $\phi(A).$
\end{definition}
The following proposition contain analogous results to the
corresponding ones for $n$-Lie algebras, their proof is similar to
$n$-Lie algebras (see {\rm
  \cite{bcm}}, Proposition 2.1).
\begin{proposition} \label{p2.3} Let $A$ be an  $n$-Lie superalgebra over $\mathbb{F}.$ Then the following statements hold:

$\mathrm{(1)}$ If $B$ is a subalgebra of $A$ such that $B+F(A)=A,$
then $B=A;$

$\mathrm{(2)}$ If $B$ is a subalgebra of $A$ such that
$B+\phi(A)=A,$ then $B=A.$
\end{proposition}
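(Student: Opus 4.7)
The plan is to argue by contradiction in the standard Frattini-theoretic way, reducing both parts to the observation that in a finite-dimensional algebra every proper subalgebra is contained in a maximal one.

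For part (1), I would assume, toward a contradiction, that $B \neq A$. Since $A$ is finite-dimensional as a superalgebra and $B$ is a proper subalgebra, a routine dimension argument produces a maximal subalgebra $M$ of $A$ with $B \subseteq M \subsetneq A$. By the very definition of the Frattini subalgebra, $F(A) \subseteq M$. Combining these, $B + F(A) \subseteq M \subsetneq A$, which contradicts the hypothesis $B + F(A) = A$. Hence $B = A$.

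For part (2), I would simply invoke part (1): since $\phi(A)$ is defined as the maximal ideal of $A$ contained in $F(A)$, we have $\phi(A) \subseteq F(A)$, and therefore
\[
A \;=\; B + \phi(A) \;\subseteq\; B + F(A) \;\subseteq\; A.
\]
Thus $B + F(A) = A$, and applying (1) yields $B = A$.

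The only place where anything could conceivably go wrong is the existence of a maximal subalgebra containing the given proper subalgebra $B$; but in the finite-dimensional graded setting this is automatic, since among the proper subalgebras containing $B$ one of maximal dimension must be a maximal subalgebra of $A$. I do not expect any genuine obstacle, which is consistent with the authors' remark that the proof is the same as in the $n$-Lie algebra case treated in \cite{bcm}.
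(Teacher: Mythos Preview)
Your proof is correct and follows exactly the standard Frattini-theoretic argument; the paper itself does not spell out a proof but simply refers to the $n$-Lie algebra case in \cite{bcm}, whose argument is the same one you have written.
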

\begin{lemma} \label{l2.91} Let $A$ be an  $n$-Lie superalgebra over $\mathbb{F}.$ Then
$F(A)\subseteq A^{2};$ in particular, if $A$ is abelian, then
$F(A)=0.$
\end{lemma}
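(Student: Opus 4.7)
The plan is the classical Frattini argument: if some element of $F(A)$ lay outside $A^{2}$, I would build a maximal subalgebra of $A$ that misses it, contradicting the very definition of the Frattini subalgebra. If $A^{2}=A$ the inclusion $F(A)\subseteq A^{2}$ is automatic, so I assume throughout that $A^{2}\subsetneq A$.

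The key observation is that any graded subspace $M$ of $A$ with $A^{2}\subseteq M$ is automatically a subalgebra, since
\[
[M,M,\dots,M]\subseteq[A,A,\dots,A]=A^{2}\subseteq M,
\]
and if moreover $M$ is a graded hyperplane of $A$, then $M$ is a \emph{maximal} subalgebra: any subalgebra strictly containing $M$ would have strictly larger dimension than $\dim A-1$, forcing it to equal $A$. Now assume, for a contradiction, that $F(A)\not\subseteq A^{2}$. Since both $F(A)$ and $A^{2}$ are graded subspaces of $A$, I may choose a \emph{homogeneous} element $x\in F(A)\setminus A^{2}$. Starting from a homogeneous basis of $A^{2}$, I adjoin $x$ and then complete to a homogeneous basis of $A$; let $M$ be the span of this homogeneous basis with $x$ deleted. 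By construction $M$ is a graded hyperplane containing $A^{2}$ but not $x$, hence a maximal subalgebra of $A$ by the previous remark. The definition of $F(A)$ then forces $x\in F(A)\subseteq M$, contradicting $x\notin M$. This proves $F(A)\subseteq A^{2}$; the second assertion is immediate, since $A$ abelian gives $A^{2}=0$ and hence $F(A)=0$.

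The only subtlety I anticipate is keeping every construction compatible with the $\mathbb{Z}_{2}$-grading, so that the hyperplane $M$ genuinely qualifies as a ``vector superspace'' and therefore as a subalgebra in the sense of Definition 1.2; this is precisely why I insist on working with homogeneous bases and on selecting $x$ homogeneous. Once homogeneity is respected, the dimensional argument for maximality and the elementary computation $[M,\dots,M]\subseteq A^{2}\subseteq M$ need no modification from the ungraded setting.
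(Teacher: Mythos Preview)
Your argument is correct and follows essentially the same route as the paper: assume $F(A)\not\subseteq A^{2}$, pick $x\in F(A)\setminus A^{2}$, and build a codimension-one graded subspace $B\supseteq A^{2}$ with $x\notin B$, which is then a maximal subalgebra omitting $x$. The only difference is that you spell out the homogeneity bookkeeping (choosing $x$ homogeneous, extending a homogeneous basis) needed to ensure $M$ is a genuine graded subspace, whereas the paper's proof asserts the existence of such a $B$ in one line without addressing the $\mathbb{Z}_{2}$-grading explicitly; your extra care is appropriate and costs nothing.
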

\begin{proof}
If $A=A^{2}=[A,\cdots,A],$ then $F(A)\subseteq A^{2};$ if $A\neq
A^{2},$ and $F(A)\nsubseteq A^{2},$ then there exists $x\in
F(A),x\notin A^{2}$ and a subalgebra $B$ of $A$ such that
$A^{2}\subseteq B, x\notin B$ and $\mathrm{dim}B=\mathrm{dim}A-1.$
Hence $B$ is a maximal subalgebras of $A$ which does not contain
$x.$ This contradicts  $x\in F(A).$ Therefore,  $F(A)\subseteq
A^{2}.$
\end{proof}

\begin{lemma} (see {\rm \cite{cm}})\label{l2.10}\,  Let $f$ be an endomorphism of a finite-dimensional vector superspace $V$ over $\mathbb{F}$
and let $\chi$ be a polynomial such that $\chi(f)=0.$ Then the
following statements hold:

$\mathrm{(1)}$ If $\chi =q_{1}q_{2}$ and $q_{1}, q_{2}$ are
relatively prime, then $V$ decomposes into a direct sum of
$f$-invariant subspaces $V=U\oplus W$ such that
$q_{1}(f)(U)=0=q_{2}(f)(W).$

$\mathrm{(2)}$ $V$ decomposes into a direct sum of $f$-invariant
subspaces $V =V_{0}\oplus V_{1},$ for which $f|_{V_{0}}$ is
nilpotent and $f|_{V_{1}}$ is invertible.
\end{lemma}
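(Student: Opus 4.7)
The plan is to handle the two parts by standard primary-decomposition arguments, lifted without change from the ungraded case, and then to note that the resulting decompositions can be taken compatible with the $\mathbb{Z}_{2}$-grading whenever $f$ is an even endomorphism (which is how the lemma is applied in the sequel).

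For part $\mathrm{(1)}$, I would invoke B\'ezout's identity: since $q_{1}$ and $q_{2}$ are coprime in $\mathbb{F}[x]$, there exist $r_{1},r_{2}\in\mathbb{F}[x]$ with $r_{1}q_{1}+r_{2}q_{2}=1$. Setting $U=\ker q_{1}(f)$ and $W=\ker q_{2}(f)$, I would first observe that $U$ and $W$ are $f$-invariant because $f$ commutes with every $q_{i}(f)$. For any $v\in V$, writing $v=r_{1}(f)q_{1}(f)v+r_{2}(f)q_{2}(f)v$ and using $\chi(f)=q_{1}(f)q_{2}(f)=0$ shows that the first summand lies in $W$ and the second in $U$, so $V=U+W$. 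Directness of the sum is immediate: if $v\in U\cap W$, then $q_{1}(f)v=q_{2}(f)v=0$, and the B\'ezout identity applied to $v$ forces $v=0$.

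For part $\mathrm{(2)}$, I would take $\chi$ to be (a multiple of) the minimal polynomial of $f$ and factor it as $\chi(x)=x^{k}\,\psi(x)$ where $\psi(0)\neq 0$. Then $x^{k}$ and $\psi$ are coprime, so applying $\mathrm{(1)}$ yields $V=V_{0}\oplus V_{1}$ with $f^{k}(V_{0})=0$ and $\psi(f)(V_{1})=0$. The first relation says $f|_{V_{0}}$ is nilpotent. For the second, write $\psi(x)=\psi(0)+x\,g(x)$; on $V_{1}$ we have $\psi(0)\cdot\mathrm{id}=-f\,g(f)$, whence $f|_{V_{1}}$ admits $-\psi(0)^{-1}g(f)|_{V_{1}}$ as a two-sided inverse, so $f|_{V_{1}}$ is invertible.

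The only extra point to check in the super setting is that $U$, $W$, $V_{0}$, $V_{1}$ are vector subsuperspaces; this is automatic provided $f$ is even, since then each $q_{i}(f)$ is even and its kernel is a graded subspace. I do not anticipate any genuine obstacle here: the argument is entirely formal and identical to the classical one, with the $\mathbb{Z}_{2}$-grading playing no role beyond the remark just made. Since the lemma is quoted directly from \cite{cm}, I would in fact cite that source rather than redo the computation in full.
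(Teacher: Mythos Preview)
Your proof sketch is correct, but there is nothing to compare against: the paper does not prove this lemma at all---it simply quotes it from \cite{cm} and moves on to Remark~\ref{r2.11}. Your closing remark that you would cite \cite{cm} rather than redo the computation is therefore exactly what the paper does.
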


\begin{remark} \label{r2.11}
Note that, in the case where $V$ is finite dimensional, we may
choose $\chi$ to be the characteristic polynomial of $f.$ The
decomposition (2) is called the Fitting decomposition with respect
to $f.$  $V_{0},V_{1}$ are referred to as the Fitting -$0$ and
Fitting -$1$ components of $V,$ respectively.
\end{remark}

 An $n$-Lie superalgebra $A$ satisfies condition $\ast$ if the only subalgebra $K$ of $A$ with the property
$K+A^{2}=A$ is $K=A,$ where $A^{2}=[A,A,\cdots,A];$ an $n$-Lie
superalgebra satisfies condition $\ast\ast$ if $a_{i}\in
A_{0}(D(a_{1},\cdots,a_{n-1}))$ for some $1\leq i\leq n-1$ for
arbitrary $a_{i}\in A,$ where $A_{0}(D(a_{1},\cdots,a_{n-1}))=\{x\in
A| D^{r}(a_{1},\cdots,a_{n-1})(x)=0$ for some $r\}.$
\begin{theorem} \label{t2.151} Let $A$ be an $n$-Lie superalgebra over $\mathbb{F}.$  Then the following statements holds:

$\mathrm{(i)}$ If $A$ satisfies condition $\ast\ast$ and any maximal
subalgebra $M$ of $A$ is a weak ideal of $A,$ then $A$ is nilpotent.

$\mathrm{(ii)}$ If $A$ is nilpotent, then every maximal subalgebra $M$ of
$A$ is an ideal of $A.$
\end{theorem}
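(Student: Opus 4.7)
For (i), apply Engel's theorem (Theorem~\ref{t1.1}): it suffices to show every left multiplication $D(a_1,\ldots,a_{n-1})$ is nilpotent on $A$. Assume, toward contradiction, that $d:=D(a_1,\ldots,a_{n-1})$ is non-nilpotent for some tuple. Lemma~\ref{l2.10}(2) applied to $d$ yields the Fitting decomposition $A=A_0(d)\oplus A_1(d)$ with $A_1(d)\neq 0$ and $d|_{A_1(d)}$ invertible. Because $d$ is a derivation, iterating the graded Leibniz rule for $n$-brackets shows $A_0(d)$ is a subalgebra of $A$, and it is proper since $A_1(d)\neq 0$. By condition $\ast\ast$, some coordinate $a_{i_0}$ lies in $A_0(d)$. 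Let $M$ be a maximal subalgebra containing the subalgebra generated by $A_0(d)\cup\{a_1,\ldots,a_{n-1}\}$; by hypothesis $M$ is a weak ideal. Then for every $y\in A$ the bracket $d(y)=[a_1,\ldots,a_{n-1},y]$ has its first $n-1$ arguments in $M$, so the weak-ideal condition $[A,M,\ldots,M]\subseteq M$ gives $d(y)\in M$. Hence $A_1(d)=d(A_1(d))\subseteq M$, and combined with $A_0(d)\subseteq M$ this yields $A=M$, contradicting maximality.

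The delicate point in (i), which I expect to be the main obstacle, is the assertion that a proper maximal subalgebra containing both $A_0(d)$ and the whole tuple $\{a_1,\ldots,a_{n-1}\}$ exists. Condition $\ast\ast$ only forces one coordinate into $A_0(d)$, and a priori the subalgebra generated by $A_0(d)\cup\{a_1,\ldots,a_{n-1}\}$ might already exhaust $A$. Overcoming this will likely require either iterating $\ast\ast$ on modified tuples (replacing $A_1(d)$-components of the coordinates one by one) or choosing the tuple $(a_1,\ldots,a_{n-1})$ to minimize the dimension of the subalgebra it generates together with $A_0(d)$; this is the step demanding the most care.

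For (ii), I would use the descending central series $A^1=A$, $A^{s+1}=[A,\ldots,A,A^s]$; nilpotency gives $A^{k+1}=0$ for some $k$. Given a maximal subalgebra $M$, let $s\geq 2$ be the smallest index with $A^s\subseteq M$ (such $s$ exists because $A^{k+1}=0\subseteq M$, and $s\geq 2$ because $A^1=A\neq M$). Then $A^{s-1}\not\subseteq M$, so $M+A^{s-1}$ strictly contains $M$. A short check shows $M+A^{s-1}$ is a subalgebra: any $n$-bracket of its elements either has all arguments in $M$ (hence lies in $M$) or has at least one argument from $A^{s-1}$, in which case super-anti-symmetry moves that factor to the last slot and the bracket lies in $[A,\ldots,A,A^{s-1}]=A^s\subseteq M$. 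Maximality of $M$ then forces $M+A^{s-1}=A$. To finish, expand $[A,\ldots,A,M]=[M+A^{s-1},\ldots,M+A^{s-1},M]$ by multilinearity: each resulting term either has all $n-1$ outer arguments in $M$ (the bracket then lies in $M$, since $M$ is a subalgebra) or has at least one outer argument from $A^{s-1}$ (the bracket then lies in $A^s\subseteq M$), so $M\lhd A$. The super-signs are immaterial here because we only need each summand to land in $M$.
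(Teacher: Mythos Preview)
For part (i) your strategy---Fitting decomposition of a hypothetical non-nilpotent $D(a)$, choice of a maximal subalgebra $M$, then the weak-ideal condition to force $A_1(D(a))\subseteq M$---is exactly the paper's. The paper, however, does \emph{not} attempt to place all of $a_1,\ldots,a_{n-1}$ inside $M$: it simply takes $M$ to be any maximal subalgebra containing $A_0(D(a))$, invokes condition $\ast\ast$ to get a single $a_{i_0}\in A_0(D(a))\subseteq M$, and then asserts $D(a)(A)\subseteq M$ from the weak-ideal hypothesis. So the difficulty you isolate is precisely the point the paper passes over without further argument; your worry that the weak-ideal inclusion $[A,M,\ldots,M]\subseteq M$ needs $n-1$ entries in $M$, not just one, is legitimate, and the paper does not supply the missing justification. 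Your suggested repairs (iterating $\ast\ast$ on modified tuples, or minimizing over tuples) go beyond what the paper does.

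For part (ii) your argument is correct but genuinely different from the paper's. The paper proceeds via the normalizer: nilpotency of $A$ makes the associative algebra generated by all left multiplications nilpotent, so on the nonzero quotient $A/M$ there is a common annihilated vector $\bar v\neq 0$; any preimage then lies in $N_A(M)\setminus M$, whence $N_A(M)=A$ by maximality and $M\lhd A$. Your descending-central-series argument is more elementary and entirely self-contained (no appeal to a common eigenvector), while the paper's approach makes explicit the self-normalizing failure $M\subsetneq N_A(M)$ that underlies nilpotency. Both are valid; yours is arguably cleaner for this statement, the paper's is more reusable when one later needs the normalizer property itself.
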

\begin{proof} $\mathrm{(i)}$ Assume that $A$ is not nilpotent. Then there exists a non-nilpotent left multiplication operator
$D(a_{1},\cdots,a_{n-1}).$ Put $D(a):=D(a_{1},\cdots,a_{n-1}).$
Since $D(a)$ is non-nilpotent, the Fitting-0 component
$A_{0}(D(a))\neq A.$ Let $M$ be a maximal subalgebra of $A$
containing $A_{0}(D(a)).$ Then  $a_{i}\in A_{0}(D(a))\subseteq M$
for some $1\leq i\leq n-1$ by assumption. Since the maximal
subalgebra $M$ of $A$ is a weak ideal of $A,$ $D(a)(A)\subseteq M.$
Since $D(a)$ is an automorphism on the Fitting-1 component
$A_{1}(D(a)),$ we obtain that $A_{1}=D(a)(A_{1})=A_{1}\cap M.$ Hence
$A_{1}\subseteq M.$ Then $A=A_{0}\oplus A_{1}\subseteq M\neq A.$
This is a contradiction. Thus all left multiplication operators are
nilpotent. Therefore, by Theorem $\ref{t1.1},$ $A$ is nilpotent.

$\mathrm{(ii)}$ We assume $A$ is nilpotent and $M$ is any maximal subalgebra of
$A.$ Then $R$ also acts nilpotently on $A$ for all $R\in D(A),$ where $D(A)$ is the vector space generated by all left multiplications of $A.$ Thus $R$ acts nilpotently on $A/M$ for all $R\in D(A).$ Then there is a $v\neq 0\in A/M$ such that $R(v)=0$ for all $R\in D(A).$ This means $R(v)\in M$ and hence $v\in N_{A}(M),$ where $N_{A}(M)=\{x\in A|[x,M,A,\cdots,A]\in M\},$ but since $v\neq 0\in A/M,$ we have that $v$ is not in $M,$ hence $M\subset N_{A}(M).$  By the maximality
of $M,$ then $N_{A}(M)=A,$
 i.e, $M$ is an ideal of $A.$
\end{proof}
\begin{corollary} \label{c2.151} Let $A$ be an $n$-Lie algebra over $\mathbb{F}.$  Then $A$
is nilpotent if and only if every maximal subalgebra $M$ of $A$ is
a weak ideal of $A.$
\end{corollary}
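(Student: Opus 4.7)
The plan is to derive this corollary as a direct specialization of Theorem \ref{t2.151}, viewing the $n$-Lie algebra $A$ as an $n$-Lie superalgebra concentrated in degree $\bar{0}$.

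First, for the (only if) direction, I would simply cite Theorem \ref{t2.151}(ii): if $A$ is nilpotent then every maximal subalgebra $M$ of $A$ is already an ideal of $A$, hence in particular a weak ideal.

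For the converse direction, I want to invoke Theorem \ref{t2.151}(i), so the only nontrivial task is to check that condition $\ast\ast$ holds automatically for every $n$-Lie algebra. The key observation is that the bracket of an ordinary $n$-Lie algebra is fully antisymmetric, so for any $a_{1},\ldots,a_{n-1}\in A$ and any index $1\le i\le n-1$,
$$D(a_{1},\ldots,a_{n-1})(a_{i}) = [a_{1},\ldots,a_{n-1},a_{i}] = 0,$$
because $a_{i}$ appears twice among the arguments and swapping the two occurrences produces a sign change. Consequently $a_{i}\in\ker D(a_{1},\ldots,a_{n-1}) \subseteq A_{0}(D(a_{1},\ldots,a_{n-1}))$, and condition $\ast\ast$ holds trivially (in fact for every $i$, not just some $i$). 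Combined with the hypothesis that every maximal subalgebra is a weak ideal, Theorem \ref{t2.151}(i) then delivers the nilpotency of $A$.

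I do not foresee any real obstacle: both directions reduce immediately to Theorem \ref{t2.151} once the automatic validity of condition $\ast\ast$ in the non-super setting is observed. The only point worth emphasizing is that this reliance on full antisymmetry is exactly what fails in the genuine superalgebra case, since super-antisymmetry does not force $[a_{1},\ldots,a_{n-1},a_{i}]$ to vanish when the repeated argument $a_{i}$ is odd; this is precisely why $\ast\ast$ must be imposed as a hypothesis in Theorem \ref{t2.151}(i) but comes for free here.
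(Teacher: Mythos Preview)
Your proposal is correct and matches the paper's intended argument: the corollary is stated immediately after Theorem~\ref{t2.151} without a separate proof, precisely because it is the specialization to the non-super case, where condition~$\ast\ast$ is automatic by the full antisymmetry of the bracket (so $D(a_{1},\ldots,a_{n-1})(a_{i})=0$ for each $i$). Your added remark explaining why this step fails for odd elements in the super setting is a helpful clarification of why $\ast\ast$ must be assumed in Theorem~\ref{t2.151}(i).
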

\begin{remark} \label{r2.151}  An $n$-Lie superalgebra
with condition $\ast\ast$ does exist. For example, Let $(A,\alpha)$
be an $n$-Lie superalgebra with basis $\{b,c\},$
$A=A_{\bar{0}}\oplus A_{\bar{1}},$ $A_{\bar{0}}=\mathbb{F}c,
A_{\bar{1}}=\mathbb{F}b,$ $\alpha=\bar{0},$ and its multiplication is as follow:
$[b,\cdots, b, c]=0, [b,\cdots, b]=c,$ then $b, c\in A_{0}(D(b,
\cdots, b, c)).$
\end{remark}

\begin{definition} \label{l2.9} An ideal $I$ of $n$-Lie superalgebra $A$ is called the Jacobson radical, if $I$ is the intersection of all maximal ideals of
$A,$ denoted by $J(A).$
\end{definition}

\begin{proposition} \label{t2.1511} For any $n$-Lie superalgebra
$A,$ $J(A)\subseteq A^{2}.$
\end{proposition}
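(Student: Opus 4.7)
The plan is to adapt the proof of Lemma \ref{l2.91} (which gave $F(A)\subseteq A^{2}$) to the Jacobson radical. The key observation --- even stronger than what was needed for the Frattini subalgebra --- is that every graded subspace $B\subseteq A$ containing $A^{2}$ is automatically an ideal, since by anti-super-symmetry $[A,\ldots,A,B]\subseteq[A,\ldots,A]=A^{2}\subseteq B$. Consequently any graded codimension-one subspace of $A$ containing $A^{2}$ is automatically a maximal ideal of $A$.

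First I would dispose of the case $A=A^{2}$, in which $J(A)\subseteq A=A^{2}$ trivially. Assume therefore $A\neq A^{2}$ and, toward a contradiction, $J(A)\not\subseteq A^{2}$. Since $J(A)$ is an intersection of graded ideals it is itself a graded subspace, so one may pick a homogeneous element $x\in J(A)\setminus A^{2}$. Using that $A^{2}$ is a graded proper subspace of $A$, I extend it to a graded codimension-one subspace $B$ of $A$ avoiding $x$: in the parity component of $A$ carrying $x$, choose a hyperplane containing the corresponding homogeneous part of $A^{2}$ but omitting $x$, and adjoin the other parity component of $A$ unchanged.

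By the observation above, $B$ is a maximal ideal of $A$, whence $J(A)\subseteq B$, contradicting $x\notin B$. This forces $J(A)\subseteq A^{2}$. The only delicate point I see is the grading bookkeeping --- choosing $x$ homogeneous and building $B$ as a vector superspace so that the definition of ideal actually applies --- and once this is handled the argument reduces to a codimension count, so I do not anticipate any substantive obstacle beyond this.
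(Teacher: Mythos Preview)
Your proposal is correct and follows essentially the same approach as the paper, which merely says ``the proof is similar to that of Lemma~\ref{l2.91}''; you have carried out the obvious adaptation, replacing the codimension-one maximal subalgebra by a codimension-one maximal ideal (using that any graded subspace containing $A^{2}$ is an ideal). Your extra care with the grading --- choosing $x$ homogeneous and building $B$ as a vector superspace --- is exactly the detail the paper's sketch leaves implicit.
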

\begin{proof} The proof is similar to that of Lemma $\ref{l2.91}.$
\end{proof}

\begin{definition} \label{l2.90} The ideal $I$ of $n$-Lie superalgebra $A$ is called $k$-solvable $(2\leq k\leq n)$ if $I^{(r)}=0$ for some $r\geq 0,$ where $I^{(0)}=I,$
$I^{(s+1)}=[\underbrace{I^{(s)},I^{(s)},\cdots,I^{(s)}}_{k},\underbrace{A,\cdots,A}_{n-k}]$
for some $s\geq 0.$  When $A=I,$ $A$ is called $k$-solvable $n$-Lie
superalgebra. Clearly, if $A$ is nilpotent, then it is
$k$-solvable$(k\geq 2)$.
\end{definition}

\begin{lemma} \label{t2.152} Let $A$ be a $k$-solvable $n$-Lie
superalgebra$(k\geq 2),$  then $J(A)=A^{(1)}.$
\end{lemma}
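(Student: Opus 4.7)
The plan is to prove the two inclusions separately. Since $A^{(1)} = [\underbrace{A,\dots,A}_{k},\underbrace{A,\dots,A}_{n-k}] = A^{2}$, one direction, $J(A)\subseteq A^{(1)}$, is immediate from Proposition \ref{t2.1511}.

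For the reverse inclusion $A^{2}\subseteq J(A)$, I would fix an arbitrary maximal ideal $M$ of $A$ and show $A^{2}\subseteq M$; intersecting over all such $M$ then gives the result. By the standard ideal correspondence, ideals of $A/M$ are in bijection with ideals of $A$ containing $M$, so maximality of $M$ forces $A/M$ to have no proper nonzero ideals. The subspace $(A/M)^{2}$ is itself an ideal of $A/M$ (this follows directly from the Filippov--Jacobi identity, by moving an outer bracket past the inner one), so it must coincide with either $0$ or $A/M$.

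The case $(A/M)^{2}=A/M$ is excluded by the $k$-solvability hypothesis. A straightforward induction on $s$ shows that if $(A/M)^{2}=A/M$, then $(A/M)^{(s)}=A/M$ for every $s\geq 0$. On the other hand, quotients of $k$-solvable $n$-Lie superalgebras are $k$-solvable, since a direct induction gives $(A/M)^{(s)}=(A^{(s)}+M)/M$; thus $(A/M)^{(r)}=0$ for some $r$. Combining the two observations yields $A/M=0$, contradicting $M\neq A$. Hence $(A/M)^{2}=0$, that is, $A^{2}\subseteq M$, and the proof concludes by intersecting over all maximal ideals.

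The only content beyond bookkeeping lies in the two structural facts used in passing: that $A^{2}$ is an ideal (not merely a subspace) of any $n$-Lie superalgebra, and that $k$-solvability descends to quotients. Both are immediate from the multilinearity of the bracket together with the Filippov--Jacobi identity, so I do not foresee any serious obstacle; the genuine engine of the argument is the dichotomy forced by maximality combined with the eventual vanishing of the derived series.
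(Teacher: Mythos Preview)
Your proof is correct and follows essentially the same route as the paper: both directions use Proposition~\ref{t2.1511} for $J(A)\subseteq A^{(1)}$, and for the reverse inclusion both pass to $A/M$ for a maximal ideal $M$, use that this quotient is $k$-solvable with no proper nonzero ideals, and conclude $(A/M)^{2}=0$. You spell out the dichotomy $(A/M)^{2}\in\{0,A/M\}$ and the exclusion of the second case more carefully than the paper, but the argument is the same.
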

\begin{proof} By Proposition $\ref{t2.1511},$ $J(A)\subseteq A^{(1)}.$  We merely need verify $A^{(1)}\subseteq J(A).$  Let $I$ be an ideal of $A.$
As $A$ is $k$-solvable, $A/I$ is $k$-solvable and does not contain any
proper ideal of $A/I,$ hence
$[A/I,\cdots,A/I]=0,$ thus $A^{(1)}\subseteq I,$ by the
definition of the Jacobson radical, we have $A^{(1)}\subseteq J(A).$
Then we get $J(A)=A^{(1)}.$
\end{proof}
\begin{theorem} \label{l2.800} Let $A$ be a nilpotent $n$-Lie superalgebra over $\mathbb{F}.$  Then $F(A)=A^{(1)}=\phi(A)=J(A).$
\end{theorem}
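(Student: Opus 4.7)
The plan is to collapse the four invariants onto each other by chaining three equalities, each of which is an immediate consequence of a result already proved in the paper.

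First, I would show $F(A)=J(A)$ by arguing that, under the nilpotency hypothesis, the family of maximal subalgebras of $A$ coincides with the family of maximal ideals of $A$. The inclusion ``maximal subalgebra $\Rightarrow$ maximal ideal'' is contained in Theorem \ref{t2.151}(ii): any maximal subalgebra $M$ is an ideal, and maximality as a subalgebra automatically upgrades to maximality as an ideal, since any ideal strictly between $M$ and $A$ would in particular be a subalgebra strictly between them. The reverse direction uses the same theorem: given a maximal ideal $M$, extend it to a maximal subalgebra $\widetilde{M}\supseteq M$; then $\widetilde{M}$ is an ideal by Theorem \ref{t2.151}(ii), and the maximality of $M$ among ideals forces $\widetilde{M}=M$.

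Second, I would observe that $F(A)$, being an intersection of ideals (by the step above), is itself an ideal of $A$. Since $\phi(A)$ is by definition the maximal ideal of $A$ contained in $F(A)$ and $F(A)$ itself qualifies, this gives $\phi(A)=F(A)$.

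Third, I would invoke the remark at the end of Definition \ref{l2.90}, according to which a nilpotent $n$-Lie superalgebra is $k$-solvable for every $k\geq 2$. Lemma \ref{t2.152} then yields $J(A)=A^{(1)}$. Chaining these equalities produces $F(A)=\phi(A)=J(A)=A^{(1)}$.

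I do not anticipate a substantive obstacle: the proof is essentially bookkeeping on top of Theorem \ref{t2.151}(ii) and Lemma \ref{t2.152}. The one point requiring brief care is the second half of the first step—verifying that every maximal ideal of a nilpotent $n$-Lie superalgebra is also maximal as a subalgebra—since it is this direction that is not immediate from the definitions and uses Theorem \ref{t2.151}(ii) in a non-trivial way.
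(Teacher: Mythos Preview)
Your argument is correct. The route differs slightly from the paper's, though both rest on Theorem~\ref{t2.151}(ii) and Lemma~\ref{t2.152}. The paper does not identify the two families of maximal objects; instead it shows $F(A)=A^{(1)}$ directly: for any maximal subalgebra $T$, Theorem~\ref{t2.151}(ii) makes $T$ an ideal, the nilpotent quotient $A/T$ has no nonzero proper subalgebra and hence satisfies $[A/T,\dots,A/T]=0$, so $A^{(1)}\subseteq T$; combined with Lemma~\ref{l2.91} this gives $F(A)=A^{(1)}$, and Lemma~\ref{t2.152} then supplies $J(A)=A^{(1)}$. Your approach bypasses Lemma~\ref{l2.91} entirely by arguing that, under nilpotency, maximal subalgebras and maximal ideals coincide, yielding $F(A)=J(A)$ in one stroke. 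The trade-off is that you must verify the converse direction (maximal ideal $\Rightarrow$ maximal subalgebra), which you do correctly; the paper's route avoids this but needs the extra lemma. Both handle $\phi(A)=F(A)$ the same way, by noting $F(A)$ is an ideal.
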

\begin{proof}
 Since $A$ is nilpotent, by Theorem $\ref{t2.151}$ (ii),
any maximal subalgebra $T$ is an ideal of $A,$ $A/T$ is nilpotent
$n$-Lie superalgebra, and $A/T$ has no proper ideal, thus $[A/T,
\cdots, A/T]=0,$ $A^{(1)}\subseteq T,$ and $A^{(1)}\subseteq F(A).$
By Lemma $\ref{l2.91},$ $F(A)=A^{(1)}.$ Since $A$ is nilpotent, $A$ is
$k$-solvable, by Lemma $\ref{t2.152},$ $J(A)=A^{(1)}.$ Therefore,
$F(A)=\phi(A)=J(A)=A^{(1)}.$ The proof is complete.
\end{proof}

\begin{theorem} \label{t2.130} Let $A$ be an $n$-Lie superalgebra over
$\mathbb{F}.$ Then the following statements hold:

$\mathrm{(1)}$ If $A$ satisfies conditions $\ast\ast$ and $\ast,$
then $A$ is nilpotent.

$\mathrm{(2)}$ If $A$ is nilpotent, then the condition $\ast$ holds
in $A$.
\end{theorem}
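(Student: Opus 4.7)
The plan is to treat the two parts separately, since they need different techniques.

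For (1), I would argue by contradiction using Engel's theorem (Theorem~\ref{t1.1}). Suppose $A$ is not nilpotent; then there exist $a_1,\ldots,a_{n-1}\in A$ such that $D(a):=D(a_1,\ldots,a_{n-1})$ is not a nilpotent operator on $A$. Taking the Fitting decomposition (Lemma~\ref{l2.10} and Remark~\ref{r2.11}) $A=A_0(D(a))\oplus A_1(D(a))$, non-nilpotence of $D(a)$ forces $A_1(D(a))\neq 0$, and hence $A_0:=A_0(D(a))\neq A$. The goal is to exhibit $A_0$ as a proper subalgebra satisfying $A_0+A^2=A$; condition $\ast$ then forces $A_0=A$, a contradiction. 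That $A_0$ is a subalgebra of $A$ comes from $D(a)$ being a derivation (by Filippov-Jacobi): iterating the Leibniz rule expands $D(a)^r[x_1,\ldots,x_n]$ as a signed sum of brackets whose slots are powers $D(a)^{r_i}(x_i)$ with $r_1+\cdots+r_n=r$, so if each $x_i\in A_0$ then, taking $r$ large enough, in every term some $r_i$ exceeds the nilpotency index at $x_i$ and the whole sum vanishes. That $A_0+A^2=A$ reduces to $A_1\subseteq A^2$, which is immediate because $D(a)|_{A_1}$ is invertible, so $A_1=D(a)(A_1)=[a_1,\ldots,a_{n-1},A_1]\subseteq A^2$. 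Condition $\ast\ast$ additionally places at least one $a_i$ inside $A_0$, which is consistent with this argument and makes the picture concrete.

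For (2), assume $A$ is nilpotent, so $A^m=0$ for some $m$, and let $K$ be a subalgebra of $A$ with $K+A^2=A$. I would show by induction on $m\geq 2$ that $A=K+A^m$; applying this to an $m$ with $A^m=0$ yields $A=K$. The base case is the hypothesis. For the inductive step, assume $A=K+A^m$ and expand $A^2=[A,\ldots,A]=[K+A^m,\ldots,K+A^m]$: the term with every slot in $K$ lies in $[K,\ldots,K]\subseteq K$, while every other term has at least one slot in $A^m$. The key observation is that whenever one slot of an $n$-bracket lies in $A^m$, super-antisymmetry lets us move that slot to the last position (introducing only a sign), and the resulting bracket lies in $[A,\ldots,A,A^m]=A^{m+1}$. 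Hence $A^2\subseteq K+A^{m+1}$, so $A=K+A^2\subseteq K+A^{m+1}$, closing the induction.

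The main technical obstacle is the bookkeeping of super signs in part (1) when verifying that $A_0(D(a))$ is closed under the $n$-ary bracket: one has to track the parity of $D(a)=\alpha+\sum p(a_i)$ and of each $x_i$ through the iterated Filippov-Jacobi expansion to be sure that every monomial in $D(a)^r[x_1,\ldots,x_n]$ really vanishes for $r$ sufficiently large. Once this is in place, both parts reduce to clean finite-dimensional arguments; part (2) is essentially formal after the observation about moving an $A^m$-slot has been justified.
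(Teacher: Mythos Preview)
Your argument is correct, but both halves differ from the paper's route.

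For (1), the paper observes that condition $\ast$ forces every maximal subalgebra $M$ to contain $A^{2}$ (since $M+A^{2}$ is a subalgebra strictly between $M$ and $A$ only if it equals $A$, which $\ast$ forbids), hence to be an ideal; it then applies Theorem~\ref{t2.151}(i), and this is precisely where condition~$\ast\ast$ is used. Your approach bypasses Theorem~\ref{t2.151} entirely: you take the Fitting decomposition for a single non-nilpotent $D(a)$, check that $A_{0}(D(a))$ is a subalgebra (the paper does exactly this computation via Theorem~\ref{t2.911} in the proof of Theorem~\ref{t2.15}), and note $A_{1}(D(a))=D(a)(A_{1}(D(a)))\subseteq A^{2}$. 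A point worth making explicit is that the $a_{i}$ should be chosen homogeneous so that $D(a)$ is a homogeneous operator and the Fitting components are graded subspaces, which is what ``subalgebra'' requires here; this is harmless because if every $D(a)$ with homogeneous entries were nilpotent then, by multilinearity and Theorem~\ref{t1.2} applied to the h.m.c.\ set of homogeneous elements, $A$ would already be nilpotent. With that caveat your argument in fact never uses condition~$\ast\ast$, so it yields a slightly sharper statement than the paper's.

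For (2), the paper passes through the Frattini theory: Theorem~\ref{l2.800} gives $A^{2}=F(A)$ when $A$ is nilpotent, and then $K+F(A)=A\Rightarrow K=A$ is Proposition~\ref{p2.3}. Your direct induction $K+A^{2}=A\Rightarrow K+A^{m}=A$ for all $m$ is an elementary, self-contained alternative that avoids Frattini considerations; the only thing to note is that the super-antisymmetry argument moving an $A^{m}$-entry to the last slot should be carried out on homogeneous elements, which suffices since $A^{m}$ is a graded subspace.
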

\begin{proof} $\mathrm{(1)}$ Suppose that the
 condition $\ast$ holds in $A.$  Let $M$ be any maximal
subalgebra of $A.$ Since $M+A^{2}\neq A,$ $A^{2}\subseteq M,$ and $M$
is an ideal in $A.$ It follows from Theorem $\ref{t2.151}$ (i) $A$ is
nilpotent.

$\mathrm{(2)}$ Suppose that $A$ is nilpotent. By Theorem $\ref{l2.800},$ we have $A^{2}=F(A).$
Then $K+A^{2}=K+F(A)=A$ implies $K=A$ by Proposition $\ref{p2.3}.$
\end{proof}

\begin{corollary} \label{c2.152} Let $A$ be an $n$-Lie algebra over $\mathbb{F}.$  Then $A$
is nilpotent if and only if the
 condition $\ast$ holds in $A.$
\end{corollary}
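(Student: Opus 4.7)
The plan is to derive the corollary as a direct specialization of Theorem \ref{t2.130} applied to the $n$-Lie algebra $A$, viewed as an $n$-Lie superalgebra concentrated in degree $\bar 0$. In that setup both implications of the corollary will fall out of the two parts of Theorem \ref{t2.130}, provided I can check that condition $\ast\ast$ is automatic for $n$-Lie algebras; the forward direction of the corollary then no longer needs any additional hypothesis beyond $\ast$.

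First I would handle the ``only if'' direction. If $A$ is a nilpotent $n$-Lie algebra, then viewing $A$ as an $n$-Lie superalgebra with $A_{\bar 1}=0$, Theorem \ref{t2.130}(2) applies verbatim and gives condition $\ast$ on $A$; there is nothing more to say.

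For the ``if'' direction, my strategy is to invoke Theorem \ref{t2.130}(1), which deduces nilpotency from the combination of $\ast$ and $\ast\ast$. So what I really need is that every $n$-Lie algebra automatically satisfies $\ast\ast$. This is where the anti-commutativity does all the work: for any $a_1,\dots,a_{n-1}\in A$ and any index $i$, one has
\begin{equation*}
D(a_1,\dots,a_{n-1})(a_i)=[a_1,\dots,a_{n-1},a_i]=0,
\end{equation*}
because the argument $a_i$ is repeated in a skew-symmetric bracket (there is no sign obstruction in the non-super case). Hence $a_i$ lies in the kernel of $D(a_1,\dots,a_{n-1})$, and in particular in the Fitting-$0$ component $A_0(D(a_1,\dots,a_{n-1}))$. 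Thus condition $\ast\ast$ holds trivially for every $n$-Lie algebra.

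Combining these observations: assuming $\ast$ holds in the $n$-Lie algebra $A$, both $\ast$ and $\ast\ast$ are available, and Theorem \ref{t2.130}(1) yields that $A$ is nilpotent. The only real step with any content is verifying that $\ast\ast$ comes for free, and this is an immediate consequence of skew-symmetry; there is no genuine obstacle in this proof, which explains why a corollary-length argument suffices.
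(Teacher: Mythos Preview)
Your proposal is correct and is exactly the intended derivation: the paper states this corollary without proof immediately after Theorem~\ref{t2.130}, and the only missing step is precisely your observation that in an ordinary $n$-Lie algebra skew-symmetry forces $D(a_1,\dots,a_{n-1})(a_i)=[a_1,\dots,a_{n-1},a_i]=0$, so condition~$\ast\ast$ holds automatically. With that in hand, both directions follow from the two parts of Theorem~\ref{t2.130} just as you describe.
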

\begin{definition} \label{d2.3}
 A subalgebra $T$ of an $n$-Lie superalgebra $A$ is called
 subinvariant if there exist subalgebras $T_{i}$ such that $A=T_{0}\supset T_{1}\supset T_{2}\supset \cdots \supset T_{n-1}\supset T_{n}=T$
where $T_{i}$ is an ideal in $T_{i-1}$ for $i=1,2,\cdots,n.$
\end{definition}
An upper chain, $C_{k},$ of length $k$ consists of subalgebras
$U_{0},U_{1},\cdots,U_{k}$ in $A$ such that $U_{0}=A$ and each
$U_{i}$ is maximal in $U_{i-1}$ for $i=1,2,\cdots,k.$ The
subinvariance number of $C_{k},$ $s(C_{k}),$ is defined to be the
number of $U_{i}\neq U_{0}=A$ which are subinvariant in $A;$ The
invariance number of $C_{k},$ $v(C_{k}),$ is defined as $k-s(C_{k})$
if $s(C_{k})\neq 0,$ and as $k$ otherwise. Then the invariance
number of $A,$ $v(A),$ is the maximum of $v(C_{k})$ for all $C_{k}$
of $A.$
\begin{lemma}\label{l2.100}\,  Let $A$ be a nonzero $n$-Lie superalgebra and $V$ be a maximal subalgebra of $A.$ If $V$ is not an ideal in $A,$
then $v(A)>v(V).$
\end{lemma}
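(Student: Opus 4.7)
The plan is to start with an upper chain of $V$ that realizes $v(V)$ and prefix it with $A$ to obtain an upper chain of $A$ of length one greater; then compare the two invariance numbers. Concretely, I would take $C'\colon V=W_{0}\supset W_{1}\supset\cdots\supset W_{m}$ with $v(C')=v(V)$ and form $C\colon A\supset V\supset W_{1}\supset\cdots\supset W_{m}$, which has length $m+1$ because $V$ is maximal in $A$ and each $W_{i}$ is maximal in $W_{i-1}$ by hypothesis.

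The heart of the argument is to control $s(C)$ by $s(C')$ via two claims. First, $V$ itself is not subinvariant in $A$: in any chain $A=T_{0}\supset T_{1}\supset\cdots\supset T_{n}=V$ of successive ideals, the maximality of $V$ in $A$ forces $T_{1}=V$, so that $V$ would already have to be an ideal of $A$, contradicting the hypothesis. Second, for each $i\ge 1$, if $W_{i}$ is subinvariant in $A$ then $W_{i}$ is subinvariant in $V$. I would prove this by intersecting a chain $A=T_{0}\supset T_{1}\supset\cdots\supset T_{n}=W_{i}$ with $V$: since $V$ is a subalgebra and each $T_{j+1}$ is an ideal in $T_{j}$, the bracket $[T_{j}\cap V,\ldots,T_{j}\cap V,T_{j+1}\cap V]$ lies in $V\cap T_{j+1}=T_{j+1}\cap V$, so $T_{j+1}\cap V\lhd T_{j}\cap V$. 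After deleting any repeated terms, this yields a strict chain of ideals from $V=T_{0}\cap V$ down to $W_{i}=T_{n}\cap V$ inside $V$.

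Combining the two claims, and noting that among the proper members of $C$ only $V,W_{1},\ldots,W_{m}$ are counted while $V$ contributes nothing to $s(C)$, we obtain $s(C)\le s(C')$. A short case analysis on the piecewise definition of $v$ finishes the proof: if $s(C')=0$, then $s(C)=0$ as well and $v(C)=m+1>m=v(C')$; if $s(C')\ge 1$, then either $s(C)=0$, giving $v(C)=m+1>m-s(C')=v(C')$, or $s(C)\ge 1$, giving $v(C)=m+1-s(C)\ge m+1-s(C')>m-s(C')=v(C')$. In every case $v(A)\ge v(C)>v(C')=v(V)$.

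The main obstacle is the intersection step showing that subinvariance in $A$ descends to subinvariance in $V$; in particular one must verify that $T_{j+1}\cap V$ is genuinely an ideal of $T_{j}\cap V$ (not just a subalgebra) and that collapsing equal consecutive intersections still leaves a legitimate ideal chain, which relies on the transitivity property that if $I\lhd J$ and $J\subseteq K$ with $I$ contained in the intersection, then the bracket computation still lies in $I$. Once this is in place, the rest of the proof is a direct, if somewhat fiddly, case distinction on whether $s(C')$ and $s(C)$ vanish.
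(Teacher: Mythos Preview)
Your proposal is correct and follows essentially the same route as the paper: extend an upper chain in $V$ by prefixing $A$, show via intersection with $V$ that subinvariance in $A$ of any $W_i$ implies subinvariance in $V$, deduce $s(C)\le s(C')$, and finish with the case split on the piecewise definition of $v$. You are in fact slightly more careful than the paper in one place: the paper writes only ``Since $V$ is not an ideal in $A$, $s(C_{n+1})\le s(C_n)$,'' whereas you explicitly argue that a maximal non-ideal subalgebra cannot be subinvariant (using that in any ideal chain down to $V$ the first proper term must already equal $V$); this is exactly the missing justification for why $V$ contributes nothing to $s(C)$.
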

\begin{proof} Suppose $C_{n}:$ $V=V_{0}\supset V_{1}\supset V_{2}\supset \cdots \supset
V_{n}$ is an upper chain of length $n$ in $V.$ Then $A\supset
V=V_{0}\supset V_{1}\supset V_{2}\supset \cdots \supset V_{n}$ is an
upper chain $C_{n+1}$ of length $n+1$ in $V.$ If $V_{i}(1\leq i\leq
n)$ is subinvariant in $A,$ then we have $$A=U_{0}\supset
U_{1}\supset U_{2}\supset \cdots \supset U_{k}=V_{i},$$ where
$U_{i}$ is an ideal in $U_{i-1}$ for $i=1,2,\cdots,k.$ We also have
$$V=A\cap V=U_{0}\cap V\supseteq U_{1}\cap V\supseteq \cdots \supseteq U_{k}\cap V=V_{i}.$$
Since $U_{i}$ is an ideal in $U_{i-1},$ $U_{i}\cap V$ an ideal in
$U_{i-1}\cap V$ and $V_{i}$ is subinvariant in $V.$ Hence, if
$V_{i}(1\leq i\leq n)$ is subinvariant in $A,$ then it is
subinvariant in $V.$ Since $V$ is not an ideal in $A,$
$s(C_{n+1})\leq s(C_{n}).$ If $s(C_{n+1})>0,$ then
$v(C_{n+1})=(n+1)-s(C_{n+1})\geq
(n+1)-s(C_{n})>n-s(C_{n})=v(C_{n}).$ If $s(C_{n+1})=0,$ then
$v(C_{n+1})=n+1>n\geq v(C_{n}).$
 Hence, $v(A)>v(V).$
\end{proof}
\begin{theorem}  \label{t2.13} Let $A$ be an $n$-Lie superalgebra over
$\mathbb{F}.$ Then the following statements hold:

$\mathrm{(1)}$ If $A$ satisfies condition $\ast\ast$ and $v(A)=v(U)$
for every proper subalgebra $U$ in $A,$ then $A$ is nilpotent.

$\mathrm{(2)}$ If $A$ is nilpotent, then for every proper subalgebra
$U$ in $A,$ $v(A)=v(U).$
\end{theorem}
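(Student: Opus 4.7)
The plan is to rely on the characterizations of nilpotency already established in Theorem \ref{t2.151}, together with Lemma \ref{l2.100} for the forward direction and inheritance of nilpotency by subalgebras for the converse.

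For part (1), assume condition $\ast\ast$ and $v(A)=v(U)$ for every proper subalgebra $U$ of $A$. In order to invoke Theorem \ref{t2.151}(i), it suffices to verify that every maximal subalgebra $M$ of $A$ is a weak ideal, and I will establish the stronger statement that every maximal subalgebra is in fact an ideal. Suppose, for a contradiction, that some maximal subalgebra $M$ of $A$ is not an ideal in $A$. Then Lemma \ref{l2.100} applied with $V=M$ yields $v(A)>v(M)$, contradicting the hypothesis $v(A)=v(M)$. Hence every maximal subalgebra of $A$ is an ideal, in particular a weak ideal, and Theorem \ref{t2.151}(i) concludes that $A$ is nilpotent.

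For part (2), assume $A$ is nilpotent. A routine induction on $s$ using the descending series $W^{1}=W,$ $W^{s+1}=[W,\ldots,W,W^{s}]$ shows $W^{s}\subseteq A^{s}$ for any subalgebra $W\subseteq A$, so every subalgebra of $A$ is nilpotent. Now let $C_{k}:A=U_{0}\supset U_{1}\supset\cdots\supset U_{k}$ be any upper chain in $A$. For each $i\geq 1$, the algebra $U_{i-1}$ is nilpotent and $U_{i}$ is maximal in $U_{i-1}$, so Theorem \ref{t2.151}(ii) forces $U_{i}$ to be an ideal in $U_{i-1}$. The chain $A=U_{0}\supset U_{1}\supset\cdots\supset U_{i}$ then witnesses that $U_{i}$ is subinvariant in $A$ for every $i\geq 1$. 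Consequently $s(C_{k})=k$ whenever $k\geq 1$, giving $v(C_{k})=k-s(C_{k})=0$, while $v(C_{0})=0$ trivially. Hence $v(A)=0$, and applying the same argument inside the (nilpotent) proper subalgebra $U$ yields $v(U)=0$, so $v(A)=v(U)$.

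The only care needed is the bookkeeping around the two-case definition of $v(C_{k})$: once every $U_{i}$ with $i\geq 1$ is shown to be subinvariant in $A$, we are in the case $s(C_{k})\neq 0$, which collapses $v(C_{k})$ to $0$. The substantive inputs are the two implications of Theorem \ref{t2.151}, with Lemma \ref{l2.100} supplying precisely the strict-decrease inequality that makes the contrapositive argument in part (1) go through.
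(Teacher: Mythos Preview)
Your proof is correct and follows essentially the same approach as the paper: for (1) you use Lemma~\ref{l2.100} contrapositively to force every maximal subalgebra to be an ideal and then invoke Theorem~\ref{t2.151}(i), and for (2) you iterate Theorem~\ref{t2.151}(ii) along an upper chain to see that every term is subinvariant, collapsing $v(C_k)$. One small remark: the paper asserts $v(A)=1$ in the nilpotent case, whereas your computation gives $v(A)=0$; under the definitions stated (with $v(C_k)=k-s(C_k)$ when $s(C_k)\neq 0$), your value $0$ is the one that follows, but either way the conclusion $v(A)=v(U)$ is unaffected.
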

\begin{proof} $\mathrm{(1)}$ Suppose $\mathrm{dim}(A)=n.$ Let $V$ be any maximal
subalgebra of $A$ such that $v(V)=v(A).$ Then by Lemma
$\ref{l2.100},$ $V$ is an ideal in $A.$  It follows from Theorem
$\ref{t2.151}$ (i) $A$ is nilpotent.

$\mathrm{(2)}$ If $A$ is nilpotent, then each subalgebra of $A$ is
subinvariant. Hence $v(A)=1.$ Since each subalgebra of $A$ is also
nilpotent, $v(V)=1,$ hence $v(A)=v(V).$
\end{proof}

\begin{corollary} \label{c2.152} Let $A$ be an $n$-Lie algebra over $\mathbb{F}.$  Then $A$
is nilpotent if and only if and $v(A)=v(U)$ for every proper
subalgebra $U$ in $A.$
\end{corollary}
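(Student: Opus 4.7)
The plan is to reduce this corollary directly to Theorem \ref{t2.13} by observing that in the purely even setting of an ordinary $n$-Lie algebra the condition $\ast\ast$ holds automatically. Once this reduction is made, the two implications of the corollary are precisely the two parts of Theorem \ref{t2.13}, with no further work required.

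First I would verify the claim that every $n$-Lie algebra $A$ satisfies condition $\ast\ast$. Recall that $\ast\ast$ asks that for every tuple $(a_{1},\ldots,a_{n-1})\in A^{n-1}$ there exist some index $i$ with $a_{i}\in A_{0}(D(a_{1},\ldots,a_{n-1}))$. Since an $n$-Lie algebra has trivial odd part, the bracket is fully skew-symmetric in all $n$ arguments, hence
\[
D(a_{1},\ldots,a_{n-1})(a_{i})=[a_{1},\ldots,a_{n-1},a_{i}]=0
\]
for every $i\in\{1,\ldots,n-1\}$, because $a_{i}$ appears twice among the entries of an alternating bracket. Thus $a_{i}\in A_{0}(D(a_{1},\ldots,a_{n-1}))$ for every $i$, so $\ast\ast$ is vacuously satisfied.

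With $\ast\ast$ in hand I would invoke both parts of Theorem \ref{t2.13}. For the ``only if'' direction, if $A$ is nilpotent then Theorem \ref{t2.13}(2) gives $v(A)=v(U)$ for every proper subalgebra $U$. For the ``if'' direction, assume $v(A)=v(U)$ for every proper subalgebra $U$; combining this with the automatic condition $\ast\ast$ established above puts $A$ exactly in the hypotheses of Theorem \ref{t2.13}(1), which yields nilpotency.

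The only potential obstacle is the verification that condition $\ast\ast$ is automatic, and this relies entirely on the full antisymmetry of the bracket in the non-super case; no deeper input is required. Consequently the corollary is a clean specialization of Theorem \ref{t2.13}, and the proof reduces to citing it after the short observation above.
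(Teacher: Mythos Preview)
Your proposal is correct and matches the paper's intended argument: the corollary is stated immediately after Theorem~\ref{t2.13} with no separate proof, so the implicit reasoning is precisely to observe that condition~$\ast\ast$ is automatic for an ordinary $n$-Lie algebra (by full skew-symmetry of the bracket) and then invoke both parts of Theorem~\ref{t2.13}. Nothing further is needed.
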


\begin{theorem} \label{t2.150} Let $U$ be a subinvariant subalgebra of $n$-Lie superalgebra $A$ and $K$ an ideal of $U$ such that $K\subseteq F(A).$ If $U/K$ is
nilpotent, then $U$ is nilpotent.
\end{theorem}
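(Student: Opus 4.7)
The plan is to reduce the claim to Engel's Theorem (Theorem \ref{t1.1}) applied to $U$: it suffices to prove that for every choice of $u_{1},\ldots,u_{n-1}\in U$, the left multiplication $D=D(u_{1},\ldots,u_{n-1})$ acts nilpotently on $U$. Fix such a $D$. Since $D$ restricts to a homogeneous endomorphism of the finite-dimensional superspace $U$, Lemma \ref{l2.10}(2) provides the Fitting decomposition $U=U_{0}\oplus U_{1}$ with $D|_{U_{0}}$ nilpotent and $D|_{U_{1}}$ invertible; the decomposition respects the $\mathbb{Z}_{2}$-grading because $D$ is homogeneous.

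Because $U/K$ is nilpotent, the operator induced by $D$ on $U/K$ is nilpotent, so some power satisfies $D^{m}(U)\subseteq K$. Since $D|_{U_{1}}$ is an automorphism of $U_{1}$, this gives $U_{1}=D^{m}(U_{1})\subseteq K\subseteq F(A)$. Let $B$ denote the subalgebra of $U$ generated by $U_{0}$; then $B+U_{1}=U$, and $U_{1}\subseteq K$ yields $B+K=U$. If we can show $K\subseteq F(U)$, then Proposition \ref{p2.3} forces $B=U$, which in turn forces $U_{1}=0$, so $D$ is nilpotent on $U=U_{0}$, and Engel's theorem closes the argument.

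To verify $K\subseteq F(U)$, I would climb the subinvariance chain $A=T_{0}\supset T_{1}\supset\cdots\supset T_{n}=U$, proving inductively that $K\subseteq F(T_{i})$ for every $i$, with the base case $K\subseteq F(A)=F(T_{0})$ given by hypothesis. Each inductive step is the auxiliary lemma: \emph{if $W$ is an ideal of the $n$-Lie superalgebra $V$, then $F(V)\cap W\subseteq F(W)$.} This lemma is established by contradiction: given a maximal subalgebra $M$ of $W$ that omits some element $k$ of $F(V)\cap W$, one extends $M$ to a maximal subalgebra $\widetilde{M}$ of $V$ whose intersection with $W$ is exactly $M$; then $k\notin \widetilde{M}$, contradicting $k\in F(V)$.

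The main obstacle is the auxiliary lemma, specifically the construction that lifts a maximal subalgebra of an ideal to a maximal subalgebra of the ambient $n$-Lie superalgebra via a normalizer-type argument, while remaining compatible with the $\mathbb{Z}_{2}$-grading and the multilinear Filippov--Jacobi identity. Once this technical point is secured, all the remaining ingredients --- the Fitting decomposition, the passage from nilpotency of $U/K$ to $U_{1}\subseteq K$, Proposition \ref{p2.3}, and the final appeal to Engel's theorem --- fit together essentially formally.
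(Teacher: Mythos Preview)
Your auxiliary lemma --- that $F(V)\cap W\subseteq F(W)$ whenever $W$ is an ideal of $V$ --- is false, and the obstacle you flag is not merely technical. Take the ordinary Heisenberg Lie algebra $V=\mathbb{F}x\oplus\mathbb{F}y\oplus\mathbb{F}z$ with $[x,y]=z$ central (viewed as a $2$-Lie superalgebra with trivial odd part). Then $V$ is nilpotent, so $F(V)=V^{2}=\mathbb{F}z$ by Theorem~\ref{l2.800}. The abelian ideal $W=\mathbb{F}y\oplus\mathbb{F}z$ has $F(W)=0$ by Lemma~\ref{l2.91}, yet $F(V)\cap W=\mathbb{F}z\neq 0$. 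Your proposed lift of the maximal subalgebra $M=\mathbb{F}y$ of $W$ to a maximal subalgebra $\widetilde{M}$ of $V$ with $\widetilde{M}\cap W=M$ is impossible: every two-dimensional subalgebra of $V$ already contains $z$, since any two-dimensional subspace that misses $z$ contains a pair of elements whose bracket is a nonzero multiple of $z$. Consequently your inductive descent to $K\subseteq F(U)$ breaks at the very first step, even in this situation where Theorem~\ref{t2.150} holds trivially (with $U=W$ and $K=\mathbb{F}z$).

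The paper avoids this by never attempting to transport the Frattini condition down to $U$. Instead it takes the Fitting decomposition of $A$ (not of $U$) with respect to $D=D(u_{1},\ldots,u_{n-1})$. Subinvariance is exploited directly: since each $u_{j}\in U\subseteq T_{i}$ and $T_{i}\lhd T_{i-1}$, one has $D(T_{i-1})\subseteq T_{i}$ for every link in the chain, whence $D^{r}(A)\subseteq U$; combined with the nilpotency of $U/K$ this gives $D^{t}(A)\subseteq K\subseteq F(A)$ for $t=\mathrm{dim}\,A$. Thus $A=E_{A}(D)+K$, where the Fitting-null component $E_{A}(D)$ is a subalgebra (by the Leibniz-type formula of Theorem~\ref{t2.911}), and Proposition~\ref{p2.3} applied to $A$ --- not to $U$ --- yields $E_{A}(D)=A$. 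Hence $D$ is nilpotent on all of $A$, in particular on $U$, and Engel's theorem finishes. The moral: use the subinvariance chain to push iterates of $D$ from $A$ down into $U$, not to push $F(A)$ down.
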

\begin{proof} We have a chain of subalgebras $U=U_{r}\lhd U_{r-1}\lhd \cdots\lhd U_{0}=A.$ Let $a_{i}\in U(1\leq i\leq
n-1)$ and $D=D(a_{1},\cdots,a_{n-1}).$
Then $DU_{i-1}\subseteq U_{i}$ since $U_{i-1}\lhd U_{i}.$  Hence
$D^{r}A\subseteq U.$ But $U/K$ is nilpotent, so $D^{s}U\subseteq K$
for some $s.$  Thus, if $\mathrm{dim}(A)=t,$ we have
$D^{t}A\subseteq K.$  But $A=\mathrm{Im}(D^{t})\oplus
\mathrm{Ker}(D^{t}),$ so $A=K+E_{A}(D),$ where $E_{A}(D)=\{x\in
A|D^{r}(x)=0$ for some $r\}.$ But $K\subseteq F(A),$ so this implies
that $E_{A}(D)=A.$ Thus every $D(a)$ for all $a_{i}(1\leq i\leq
n-1)\in U$ is nilpotent and $U$ is nilpotent by Theorem
$\ref{t1.1}.$
\end{proof}

\begin{example} Let $(A,\alpha)$ be an $n$-Lie superalgebra with basis $\{b,c\},$
$A=A_{\bar{0}}\oplus A_{\bar{1}},$ $A_{\bar{0}}=\mathbb{F}c,
A_{\bar{1}}=\mathbb{F}b,$ $\alpha=\bar{0},$ and its multiplication is  
as follow: $[b,\cdots, b, c]=0, [b,\cdots, b]=c,$ then $A$ is
nilpotent, however $\mathrm{dim}(A/A^{2})=1.$
\end{example}
The above example shows the definition of $S^{\ast}$ algebra for an
$n$-Lie superalgebra is analogous to the case of Leibniz algebra,
thus we give the following definition:
\begin{definition} \label{d2.3}
An $n$-Lie superalgebra $A$ is called an $S^{\ast}$ algebra if every
proper non-abelian subalgebra $H$ of $A$ has either
$\mathrm{dim}(H/H^{2})\geq 2$ or
 $H$ is nilpotent and generated by one element.
\end{definition}

\begin{lemma}\label{l2.101} Let $A$ be a non-abelian nilpotent $n$-Lie superalgebra. Then we have either $\mathrm{dim}(A/A^{2})\geq 2$ or
 $A$ is generated by one element.
\end{lemma}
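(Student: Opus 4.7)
The plan is to argue the nontrivial direction by contrapositive: assuming $\dim(A/A^{2})<2$, I will show that $A$ is generated by a single element. The first step is to rule out the possibility $A=A^{2}$. Since $A$ is non-abelian we have $A\neq 0$; and if $A=A^{2}$, then the recursion $A^{s+1}=[A,\cdots,A,A^{s}]$ immediately forces $A^{s}=A$ for every $s\geq 1$, contradicting nilpotency. Combined with the hypothesis $\dim(A/A^{2})\leq 1$, this pins down $\dim(A/A^{2})=1$.

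Next I would produce a homogeneous generator. Because $A^{2}$ is spanned by brackets of homogeneous elements, it is a graded subspace of $A=A_{\bar 0}\oplus A_{\bar 1}$, so the quotient $A/A^{2}$ inherits a $\mathbb{Z}_{2}$-grading. A one-dimensional graded space is concentrated in a single parity, so I may select a homogeneous element $b\in A\setminus A^{2}$ with $A=\mathbb{F}b+A^{2}$.

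To close the proof, let $B$ denote the subalgebra of $A$ generated by $b$. Since $b\in B$, we immediately get $B+A^{2}=A$. Because $A$ is nilpotent, Theorem \ref{l2.800} yields $F(A)=A^{(1)}=A^{2}$, so that $B+F(A)=A$, and Proposition \ref{p2.3}(1) forces $B=A$. Thus $A$ is generated by the single homogeneous element $b$, as required.

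The only genuine subtlety is the homogeneity of the generator in the second step; once that is secured by the grading argument, the rest is a direct appeal to the Frattini-subalgebra machinery (Proposition \ref{p2.3} and Theorem \ref{l2.800}) already established in the paper.
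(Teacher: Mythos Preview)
Your proof is correct and follows essentially the same route as the paper's: both hinge on Theorem~\ref{l2.800} to identify $A^{2}=F(A)$, rule out $\dim(A/A^{2})=0$ via nilpotency, and then conclude that a single element generating $A$ modulo $A^{2}$ already generates $A$ by the Frattini property (Proposition~\ref{p2.3}). Your version simply spells out the last step and the homogeneity of the generator, which the paper leaves implicit.
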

\begin{proof} Since $A$ is nilpotent, by Theorem $\ref{l2.800},$ one gets $A^{2}=F(A).$ It is clear that $\mathrm{dim}(A/A^{2})\neq 0$ since $A$ is nilpotent. If $\mathrm{dim}(A/A^{2})=1,$
then $A$ is generated by one element. Otherwise
$\mathrm{dim}(A/A^{2})\geq 2.$
\end{proof}

\begin{lemma}\label{l2.11}\,  Let $A$ be a non-nilpotent $n$-Lie superalgebra. If all proper subalgebras of $A$ are nilpotent, then $\mathrm{dim}(A/A^{2})\leq 1.$
\end{lemma}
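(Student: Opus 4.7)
The plan is to argue by contradiction: assume $\mathrm{dim}(A/A^{2})\geq 2$ and deduce that $A$ must be nilpotent, contradicting the hypothesis. Because $A^{2}$ has codimension at least two in $A$, I can choose two distinct hyperplanes $H_{1},H_{2}$ of the abelian quotient $A/A^{2}$ and pull them back along the canonical projection $\pi:A\to A/A^{2}$ to obtain two distinct codimension-one subspaces $M_{i}=\pi^{-1}(H_{i})$, each containing $A^{2}$. Since $[A,\ldots,A,M_{i}]\subseteq A^{2}\subseteq M_{i}$, each $M_{i}$ is in fact a proper ideal (and a maximal subalgebra) of $A$, and $M_{1}+M_{2}=A$; by the standing hypothesis both $M_{1}$ and $M_{2}$ are nilpotent $n$-Lie superalgebras.

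My strategy is then to apply Theorem \ref{t1.2} to the regular representation $\rho=D$ and the set $S$ of all homogeneous elements of $M_{1}\cup M_{2}$. The set $S$ is h.m.c.: homogeneity is preserved by scalars and brackets, and the ideal property forces any bracket of elements of $S$ to lie in $M_{1}$, $M_{2}$ or $M_{1}\cap M_{2}$, and hence in $M_{1}\cup M_{2}$. Moreover $F(S)=M_{1}+M_{2}=A$. Once I verify that $D(a_{1},\ldots,a_{n-1})$ is a nilpotent endomorphism of $A$ for all $a_{1},\ldots,a_{n-1}\in S$, Theorem \ref{t1.2} will yield that $A_{\rho}^{*}=S_{\rho}^{*}$ is a nilpotent algebra of operators; in particular every $D(a_{1},\ldots,a_{n-1})$ with $a_{i}\in A$ will be nilpotent, whence Engel's theorem (Theorem \ref{t1.1}) will force $A$ itself to be nilpotent, the desired contradiction.

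Everything thus reduces to establishing nilpotency of $D(a_{1},\ldots,a_{n-1})$ on $A$ whenever $a_{1},\ldots,a_{n-1}\in S$, which I would handle by cases. If all $a_{i}$ lie in $M_{1}$, then $D(a)(A)\subseteq M_{1}$ by the ideal property, and $D(a)|_{M_{1}}$ is a left multiplication inside the nilpotent algebra $M_{1}$, hence nilpotent; the case $a_{i}\in M_{2}$ is symmetric. The delicate mixed case, with some $a_{j}\in M_{1}$ and some $a_{k}\in M_{2}$, gives $D(a)(A)\subseteq M_{1}\cap M_{2}$, and for $x\in M_{1}\cap M_{2}$ I would use anti-super-commutativity to move $a_{j}$ and $x$ into the last two slots of the bracket, identifying $[a_{1},\ldots,a_{n-1},x]$ (up to sign) as an element of $[A,\ldots,A,M_{1},M_{1}]=M_{1}^{2}$; iterating gives $D(a)^{k}(M_{1}\cap M_{2})\subseteq M_{1}^{k+1}$, which vanishes once $k+1$ reaches the nilpotency index of $M_{1}$, and combined with $D(a)(A)\subseteq M_{1}\cap M_{2}$ yields nilpotency of $D(a)$ on all of $A$. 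The main technical obstacle I expect lies in this mixed-case bookkeeping: one must carefully combine the paper's recursive definition $I^{s+1}=[A,\ldots,A,I,I^{s}]$ (with two $I$-slots per inductive step) with the super-sign conventions of anti-super-commutativity so that iterating the rearrangement actually matches the inductive definition of the $M_{1}^{k}$. The core observation is nevertheless clean: any bracket of homogeneous elements containing two entries from $M_{1}$ can be rewritten, up to sign, in the canonical form defining $M_{1}^{2}$, and this lifts inductively to higher powers.
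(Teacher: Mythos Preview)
Your setup coincides with the paper's: from $\dim(A/A^{2})\geq 2$ you pull back two distinct hyperplanes to codimension-one ideals $M_{1},M_{2}\supseteq A^{2}$ with $M_{1}+M_{2}=A$, both nilpotent by hypothesis. At that point the paper simply writes one sentence --- ``$M$ and $N$ are nilpotent ideals, $A=M+N$ is nilpotent'' --- invoking the standard fact that a sum of nilpotent ideals is nilpotent. Your route through Theorem~\ref{t1.2} with $S$ the homogeneous part of $M_{1}\cup M_{2}$ is genuinely different and potentially more self-contained, and your Cases 1 and 2 (all $a_{i}$ in a single $M_{j}$) are fine.

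The mixed case, however, has a real gap, and it is not the sign bookkeeping you flag. The hypothesis gives only that $M_{1}$ is nilpotent \emph{as an $n$-Lie superalgebra}: the intrinsic series $M_{1}^{(s+1)}=[M_{1},\ldots,M_{1},M_{1}^{(s)}]$ terminates. Your inclusion $D(a)^{k}(M_{1}\cap M_{2})\subseteq M_{1}^{k+1}$ lands in the \emph{ideal} series $M_{1}^{s+1}=[A,\ldots,A,M_{1},M_{1}^{s}]$ of Definition~1.2(vi), with $n-2$ ambient $A$-slots, and you then assert this vanishes for large $k$. For $n\geq 3$ the ideal series is strictly larger than the subalgebra series in general, and nothing in the setup forces it to terminate: the entries $a_{i}\in M_{2}\setminus M_{1}$ occupy those ambient $A$-slots and are not governed by the intrinsic nilpotency of $M_{1}$. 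So ``vanishes once $k+1$ reaches the nilpotency index of $M_{1}$'' is unjustified as stated. To close the argument you would need either to prove directly that the sum of two ideals each nilpotent as a subalgebra is nilpotent (which is precisely what the paper's one-liner is quoting), or to upgrade subalgebra-nilpotency of $M_{1}$ to ideal-nilpotency here --- an extra step you have not supplied.
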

\begin{proof} Suppose that $\mathrm{dim}(A/A^{2})\geq 2.$ Then there exist distinct maximal subalgebras $M$ and $N$ which contain $A^{2}.$
Hence $M$ and $N$ are nilpotent ideals, $A=M+N$ is nilpotent, a
contradiction.
\end{proof}

\begin{theorem} \label{t2.13} An $n$-Lie superalgebra $A$ is an $S^{\ast}$ algebra if and
only if $A$ is nilpotent.
\end{theorem}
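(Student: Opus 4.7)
The plan is to prove the equivalence in two separate directions. The $(\Leftarrow)$ direction is short and follows from Lemma~\ref{l2.101}; the $(\Rightarrow)$ direction proceeds by induction on $\dim A$ and is closed by Lemma~\ref{l2.11}.

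For $(\Leftarrow)$, I would assume $A$ is nilpotent and pick any proper non-abelian subalgebra $H$ of $A$. Since every subalgebra of a nilpotent $n$-Lie superalgebra is itself nilpotent, $H$ is a non-abelian nilpotent $n$-Lie superalgebra, so Lemma~\ref{l2.101} applied to $H$ yields $\dim(H/H^{2})\geq 2$ or $H$ is generated by one element. Together with the nilpotency of $H$ this is exactly the condition demanded by Definition~\ref{d2.3}, so $A$ is an $S^{\ast}$ algebra.

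For $(\Rightarrow)$, I would induct on $\dim A$, the one-dimensional case being trivial. Assume $A$ is $S^{\ast}$, and observe first that the $S^{\ast}$ property is hereditary on proper subalgebras: if $B\subsetneq A$ and $K$ is a proper non-abelian subalgebra of $B$, then $K\subseteq B\subsetneq A$ shows that $K$ is already a proper non-abelian subalgebra of $A$, and hence $K$ satisfies the defining $S^{\ast}$ condition. Therefore $B$ is an $S^{\ast}$ algebra of strictly smaller dimension, and by the inductive hypothesis $B$ is nilpotent. Thus every proper subalgebra of $A$ is nilpotent.

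Now suppose, for contradiction, that $A$ itself is non-nilpotent; in particular $A$ is non-abelian. Since every proper subalgebra of $A$ is nilpotent, Lemma~\ref{l2.11} forces $\dim(A/A^{2})\leq 1$. Applying the $S^{\ast}$ dichotomy at the level of the non-abelian algebra $A$, one of the following must hold: $\dim(A/A^{2})\geq 2$, contradicting the previous inequality, or $A$ is nilpotent and generated by one element, contradicting the assumed non-nilpotency of $A$. Either way we reach the desired contradiction.

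I expect the main delicate point to be the application of the $S^{\ast}$ dichotomy to $A$ itself in the final step, since Definition~\ref{d2.3} is strictly phrased for \emph{proper} non-abelian subalgebras. To make this rigorous I would either read the $S^{\ast}$ clause as constraining all non-abelian subalgebras, or handle the two borderline cases $\dim(A/A^{2})\in\{0,1\}$ separately: in the codimension-one case $A^{2}$ is itself a proper nilpotent ideal, and combining Theorem~\ref{l2.800} (which identifies $F(A)$ with $A^{2}$ on nilpotent quotients) with Engel's theorem (Theorem~\ref{t1.1}) should lift nilpotency from $A^{2}$ to $A$; the case $A=A^{2}$ can be addressed via Theorem~\ref{t2.151}(i) after verifying that every maximal subalgebra is a weak ideal satisfying condition $\ast\ast$.
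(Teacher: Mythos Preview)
Your argument follows essentially the same route as the paper: the $(\Leftarrow)$ direction via Lemma~\ref{l2.101}, and the $(\Rightarrow)$ direction by a minimal-counterexample/induction argument in which all proper subalgebras become nilpotent, Lemma~\ref{l2.11} forces $\dim(A/A^{2})\leq 1$, and then the $S^{\ast}$ dichotomy is invoked on $A$ itself to produce the contradiction. The paper carries out exactly this last step without comment, implicitly reading Definition~\ref{d2.3} as applying to $A$ and not only to its proper subalgebras; so the ``delicate point'' you flag is present in the paper as well, and your main line of proof matches it.

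Your proposed alternative repairs in the final paragraph, however, do not go through as written. Invoking Theorem~\ref{l2.800} to identify $F(A)$ with $A^{2}$ presupposes that $A$ is nilpotent, which is precisely what is in question, so that route is circular; and Theorem~\ref{t2.151}(i) requires condition~$\ast\ast$, which is not part of the hypotheses here, so it cannot be used to dispose of the case $A=A^{2}$. If one insists on keeping ``proper'' in Definition~\ref{d2.3}, a different argument is needed; the paper's own proof simply treats the $S^{\ast}$ clause as constraining $A$ as well, and you should do the same rather than rely on those two fallback arguments.
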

\begin{proof} If $A$ is nilpotent, then every subalgebra of $A$ is nilpotent, so $A$ is an $S^{\ast}$ algebra by
Lemma $\ref{l2.101}.$ Conversely, suppose that there exists an
$S^{\ast}$ algebra that is not nilpotent. Let $A$ be the smallest
dimensional one that is not nilpotent. All proper subalgebras of $A$
are $S^{\ast}$ algebras, hence are nilpotent. Thus
$\mathrm{dim}(A/A^{2})\leq 1$ by Lemma $\ref{l2.11}.$ Since $A$ is
an $S^{\ast}$ algebra, it is generated by one element and is
nilpotent,  a contradiction.
\end{proof}

\begin{theorem} \label{t2.911} Let $(A,\alpha)$ be an $n$-Lie superalgebra and $D$ a derivation of $A.$ For $x_{1}, \cdots, x_{n}\in A,$ then  $D^{k}[x_{1},\cdots,x_{n}]=\sum\limits_{i_{1}+\cdots+i_{n}=k}a^{(k)}_{i_{1},\cdots,i_{n}}[D^{i_{1}}(x_{1}),\cdots,D^{i_{n}}(x_{n})],$ where $a^{(k)}_{i_{1},\cdots,i_{n}}\in \mathbb{F}.$
\end{theorem}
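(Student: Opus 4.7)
The natural approach is induction on $k$, where the only substantive input is that $D$ satisfies a super Leibniz rule on $n$-ary brackets.

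For the base case $k=0$ the identity is trivial with $a^{(0)}_{0,\ldots,0}=1$. For $k=1$, since $D$ is a (homogeneous) derivation of parity $|D|$, applying $D$ to $[x_{1},\ldots,x_{n}]$ yields
\begin{equation*}
D[x_{1},\ldots,x_{n}]=\sum_{j=1}^{n}\varepsilon_{j}\,[x_{1},\ldots,D(x_{j}),\ldots,x_{n}],
\end{equation*}
for signs $\varepsilon_{j}=(-1)^{|D|(p(x_{1})+\cdots+p(x_{j-1}))}\in\mathbb{F}$. This is exactly the required expansion for $k=1$, with the multi-index $(0,\ldots,1,\ldots,0)$ of weight one.

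For the inductive step, assume the identity holds for some $k\ge 0$. Applying $D$ to both sides and using $\mathbb{F}$-linearity gives
\begin{equation*}
D^{k+1}[x_{1},\ldots,x_{n}]=\sum_{i_{1}+\cdots+i_{n}=k}a^{(k)}_{i_{1},\ldots,i_{n}}\,D\bigl[D^{i_{1}}(x_{1}),\ldots,D^{i_{n}}(x_{n})\bigr].
\end{equation*}
By the $k=1$ case applied to the tuple $(D^{i_{1}}(x_{1}),\ldots,D^{i_{n}}(x_{n}))$, each inner $D$-bracket expands as a sum of $n$ terms in which exactly one $D^{i_{j}}$ is replaced by $D^{i_{j}+1}$, each with a scalar coefficient in $\mathbb{F}$. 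Re-indexing by the new tuple $(i_{1},\ldots,i_{j}+1,\ldots,i_{n})$, whose entries sum to $k+1$, and collecting terms that correspond to the same multi-index, yields an expression of the desired form with new coefficients $a^{(k+1)}_{j_{1},\ldots,j_{n}}\in\mathbb{F}$.

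There is no real obstacle beyond bookkeeping: because we only assert the \emph{existence} of scalar coefficients $a^{(k)}_{i_{1},\ldots,i_{n}}\in\mathbb{F}$ and make no claim about their explicit values, the sign contributions coming from the grading of $D$ and of the $x_{j}$'s are absorbed harmlessly into these scalars. If one wanted a closed form, the mild subtlety would be that the signs $\varepsilon_{j}$ at each step depend on the parities $p(D^{i_{s}}(x_{s}))=i_{s}|D|+p(x_{s})$, so the $a^{(k)}_{i_{1},\ldots,i_{n}}$ depend on $(p(x_{1}),\ldots,p(x_{n}))$; but for the statement as written this is not required.
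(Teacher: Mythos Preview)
Your proof is correct and follows essentially the same approach as the paper: induction on $k$, using the super Leibniz rule for the base case and applying $D$ to the inductive hypothesis, then re-indexing. The only cosmetic difference is that the paper starts the induction at $k=1$ (and its sign $\varepsilon_{j}$ carries an extra factor $(-1)^{p(D)\alpha}$ coming from the parity $\alpha$ of the bracket), but as you correctly observe, since only the existence of scalar coefficients is claimed, the precise form of the signs is immaterial.
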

\begin{proof} We have induction on $k.$ If $k=1,$ then \begin{eqnarray*}&&D[x_{1},x_{2},\cdots,x_{n}]\\
&&=(-1)^{p(D)\alpha}[D(x_{1}),x_{2},\cdots,x_{n}]+(-1)^{p(D)(p(x_{1})+\alpha)}[x_{1},D(x_{2}),x_{3},\cdots,x_{n}]\\
&&\quad\quad+\cdots+
(-1)^{p(D)(p(x_{1})+\cdots+p(x_{n})+\alpha)}[x_{1},x_{2},\cdots,x_{n-1},D(x_{n})]\end{eqnarray*}
and the base case is satisfied. We now assume that the result holds
for $k$ and consider $k+1.$ Then
\begin{eqnarray*}&& D^{k+1}[x_{1},\cdots,x_{n}]\\
&&= D(\sum\limits_{i_{1}+\cdots+i_{n}=k}a^{(k)}_{i_{1},\cdots,i_{n}}[D^{i_{1}}(x_{1}),\cdots,D^{i_{n}}(x_{n})])\\
&&= \sum\limits_{i_{1}+\cdots+i_{n}=k}a^{(k)}_{i_{1},\cdots,i_{n}}\{(-1)^{p(D)\alpha}[D^{i_{1}+1}(x_{1}),\cdots,D^{i_{n}}(x_{n})]\\
&&\quad\quad+\cdots+(-1)^{p(D)\{p(x_{1})+
\cdots+p(x_{n})+\alpha+(i_{1}+\cdots+i_{n-1})p(D)\}}[D^{i_{1}}(x_{1}),\cdots,D^{i_{n}+1}(x_{n})]\}\\
&&=\sum\limits_{j_{1}+\cdots+j_{n}=k+1}a^{(k+1)}_{j_{1},\cdots,j_{n}}[D^{j_{1}}(x_{1}),\cdots,D^{j_{n}}(x_{n})].
\end{eqnarray*}
The last equality holds is because suppose that the array $(j_{1},\cdots,j_{n})$ satisfies $j_{1}+\cdots+j_{n}=k+1,$ then there must exist array $(i_{1},\cdots,i_{n})$ such that $i_{1}+\cdots+i_{n}=k$ and $m\in \{1,\cdots,n\}$ satisfies $i_{1}=j_{1}, \cdots, i_{m-1}=j_{m-1},i_{m}+1=j_{m},i_{m+1}=j_{m+1},\cdots, i_{n}=j_{n},$ that is, $(i_{1},\cdots,i_{m-1},i_{m}+1,i_{m+1},\cdots,i_{n})=(j_{1},\cdots,j_{m-1},j_{m},j_{m+1},\cdots,j_{n}).$
This proves the theorem.
\end{proof}

\begin{theorem} \label{t2.15} Let $A$ be an $n$-Lie superalgebra over
$\mathbb{F}.$ Suppose that $B$ is an ideal of $A$ and $C$ is an
ideal of $B$ such that $C\subseteq B\cap F(A).$ If $B/C$ is
nilpotent, then $B$ is nilpotent.
\end{theorem}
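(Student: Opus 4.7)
The statement is essentially a special case of Theorem \ref{t2.150}: the fact that $B\lhd A$ exhibits $B$ as a subinvariant subalgebra of $A$ via the chain $A\supset B$ of length one, and $C\subseteq B\cap F(A)\subseteq F(A)$ is an ideal of $B$ with $B/C$ nilpotent. Applying Theorem \ref{t2.150} with $U=B$ and $K=C$ yields $B$ nilpotent immediately.

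For a self-contained proof that mirrors the argument of Theorem \ref{t2.150}, the plan is to show that for arbitrary $b_{1},\ldots,b_{n-1}\in B$ the operator $D:=D(b_{1},\ldots,b_{n-1})$ is nilpotent on $A$, and then invoke Engel's theorem (Theorem \ref{t1.1}) for $B$. First, because $B\lhd A$, one has $D(A)\subseteq B$. Because $B/C$ is nilpotent, the induced operator on $B/C$ is nilpotent, so $D^{s}(B)\subseteq C$ for some $s$, whence $D^{s+1}(A)\subseteq C$. Applying the Fitting decomposition (Lemma \ref{l2.10}(2)) to $D$ acting on $A$ yields $A=A_{0}(D)\oplus A_{1}(D)$ with $D$ invertible on $A_{1}(D)$; hence $A_{1}(D)=D^{t}(A_{1}(D))\subseteq C$ for $t$ large, and therefore $A=C+A_{0}(D)$ as vector spaces.

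The principal technical point is that $A_{0}(D)$ is itself a subalgebra of $A$. This follows from Theorem \ref{t2.911}: for $x_{1},\ldots,x_{n}\in A_{0}(D)$, choose $r$ so that $D^{r}(x_{i})=0$ for every $i$; then every summand in the expansion of $D^{nr}[x_{1},\ldots,x_{n}]$ contains some factor $D^{i_{j}}(x_{j})$ with $i_{j}\geq r$, so the whole expression is zero and $[x_{1},\ldots,x_{n}]\in A_{0}(D)$. With $A_{0}(D)$ a subalgebra and $C\subseteq F(A)$, the relation $A=A_{0}(D)+C\subseteq A_{0}(D)+F(A)$ together with Proposition \ref{p2.3}(1) forces $A_{0}(D)=A$. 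Thus $D$ is nilpotent on $A$, hence its restriction to $B$ is nilpotent, and as $b_{1},\ldots,b_{n-1}\in B$ were arbitrary, Engel's theorem concludes that $B$ is nilpotent.
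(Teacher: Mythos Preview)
Your proposal is correct and your self-contained argument is essentially identical to the paper's own proof: both take arbitrary $b_{1},\ldots,b_{n-1}\in B$, form the Fitting decomposition $A=A_{0}(D)\oplus A_{1}(D)$ with respect to $D=D(b_{1},\ldots,b_{n-1})$, use nilpotency of $B/C$ to force $A_{1}(D)\subseteq C\subseteq F(A)$, invoke Theorem~\ref{t2.911} to show $A_{0}(D)$ is a subalgebra, apply Proposition~\ref{p2.3}(1) to conclude $A_{0}(D)=A$, and finish with Engel's theorem. Your additional opening remark---that the result is an immediate special case of Theorem~\ref{t2.150} via the one-step chain $A\supset B$---is a valid shortcut the paper does not make explicit, though note that the paper's proof of Theorem~\ref{t2.150} itself tacitly uses that $E_{A}(D)$ is a subalgebra, a fact only justified later via Theorem~\ref{t2.911}; your self-contained version handles this cleanly.
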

\begin{proof} Take any element $x_{i}(1\leq i\leq n-1)$ of $B.$ By Remark $\ref{r2.11},$ $A=A_{0}+A_{1}$ is the Fitting decomposition relative to $D(x),$
where $D(x)=D(x_{1},\cdots,x_{n-1})$ is nilpotent in $A_{0}$ and
$D(x)$ is an isomorphism of $A_{1}.$ So $A_{1}\subset B.$ Since
$B/C$ is nilpotent, there exists an integer $n$ such that
$A_{1}=D^{n}(x)(A_{1})\subset C.$ Then
$A=A_{0}+F(A).$ If $A_{0}$ is a subalgebra of $A,$ by Proposition $\ref{p2.3},$ it implies that
$A=A_{0}.$ Hence, $D(x)$ is nilpotent for any element $x_{i}(1\leq
i\leq n-1)\in B.$ Therefore, $B$ is nilpotent by virtue of Theorem
$\ref{t1.1}.$

It remains to show that $A_{0}$ is a subalgebra of $A.$  For $x_{1}, \cdots, x_{n}\in A,$ then by Theorem $\ref{t2.911},$ we have $$D(x)^{k}[x_{1},\cdots,x_{n}]=\sum\limits_{i_{1}+\cdots+i_{n}=k}a^{(k)}_{i_{1},\cdots,i_{n}}[D(x)^{i_{1}}(x_{1}),\cdots,D(x)^{i_{n}}(x_{n})].$$
If $x_{1}, \cdots, x_{n}\in A_{0},$ then $D(x)^{k}[x_{1},\cdots,x_{n}]=0,$ for a sufficiently big integer $k,$ hence $[x_{1},\cdots,x_{n}]\in A_{0}.$
\end{proof}
\begin{corollary} \label{p2.150} Let $A$ be an $n$-Lie superalgebra with $B\lhd A$ such that $B\subseteq F(A),$ then $B$ is nilpotent. In
particular, $\phi(A)$ is a nilpotent ideal of $A.$
\end{corollary}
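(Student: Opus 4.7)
The plan is to deduce this corollary directly from Theorem $\ref{t2.15}$ by a clever choice of the auxiliary ideal $C$, and then apply the first part to $\phi(A)$.

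First I would set $C = B$ itself. The hypothesis $B \lhd A$ gives the first condition of Theorem $\ref{t2.15}$, and trivially $B$ is an ideal of $B$. The inclusion $B \subseteq B \cap F(A)$ is immediate from the assumption $B \subseteq F(A)$. Finally, $B/C = B/B = 0$ is trivially nilpotent. Hence Theorem $\ref{t2.15}$ applies and yields that $B$ is nilpotent.

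For the ``in particular'' statement, I would invoke Definition $\ref{d2.2}$: $\phi(A)$ is by construction an ideal of $A$ contained in $F(A)$. Therefore $\phi(A)$ satisfies the hypotheses of the first part of the corollary with $B = \phi(A)$, and so $\phi(A)$ is a nilpotent ideal of $A$.

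There is essentially no obstacle here; the corollary is a degenerate application of Theorem $\ref{t2.15}$ with $C = B$, where the nilpotence of the quotient $B/C$ is automatic. The only point worth mentioning explicitly in the write-up is the verification that $\phi(A)$ is indeed both an ideal of $A$ and contained in $F(A)$, which follows straight from its definition as the maximal ideal of $A$ lying inside $F(A)$.
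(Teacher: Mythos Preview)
Your proposal is correct and matches the paper's intended derivation: the corollary is stated immediately after Theorem~\ref{t2.15} without proof, precisely because it is the degenerate case $C=B$ you describe, with the ``in particular'' clause following from the definition of $\phi(A)$.
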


\begin{definition} \label{l2.900} A nilpotent $n$-Lie superalgebra $A$ is said to be of class $t,$ if $A^{t+1}=0$ and $A^{t}\neq
0.$ We also denote by $cl(A)=t.$
\end{definition}

Put $AN^{i}=[A,\cdots, A, N^{i}]$ and $A^{j}N^{i}=[A,\cdots, A,
A^{j-1}N^{i}]$ for some $j>1.$

\begin{lemma} \label{l2.911} Let $A$ be an $n$-Lie superalgebra with $N\lhd A$ and $A/N^{2}$ be nilpotent. If $A^{m+1}\subset N^{2}$ for some minimal $m,$ then
$A^{u}N^{r}\subset N^{r+1}$ for $r>0$ where $u=(r-1)(n-1)(m-1)+m.$
\end{lemma}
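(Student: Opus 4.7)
The plan is to proceed by induction on $r$. For the base case $r=1$ the claim reduces to $A^{m}N\subset N^{2}$. The key intermediate observation is that $A^{k}N\subset A^{k+1}$ for every $k\geq 0$, which I would establish by induction on $k$: the base $A^{0}N=N\subset A$ uses $N\lhd A$, and the step $A^{k+1}N=[A,\cdots,A,A^{k}N]\subset[A,\cdots,A,A^{k+1}]=A^{k+2}$ uses monotonicity of the bracket together with the inductive hypothesis. Setting $k=m$ and invoking the assumption $A^{m+1}\subset N^{2}$ gives $A^{m}N\subset A^{m+1}\subset N^{2}$, settling the base case.

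For the inductive step, assume $A^{u_{r}}N^{r}\subset N^{r+1}$ with $u_{r}=(r-1)(n-1)(m-1)+m$; the goal is $A^{u_{r+1}}N^{r+1}\subset N^{r+2}$, where $u_{r+1}=u_{r}+(n-1)(m-1)$. Writing $N^{r+1}=[A,\cdots,A,N,N^{r}]$ (with $n-2$ copies of $A$), repeated application of the Filippov-Jacobi identity expresses any element of $A^{u_{r+1}}N^{r+1}$ as a finite sum of terms indexed by distributions $(k_{1},\ldots,k_{n-2},k_{N},k_{Z})$ of the $u_{r+1}$ outer $A$-actions among the $n$ slots of $N^{r+1}$; the resulting term is contained in $[A^{k_{1}+1},\ldots,A^{k_{n-2}+1},A^{k_{N}}N,A^{k_{Z}}N^{r}]$, with $k_{1}+\cdots+k_{n-2}+k_{N}+k_{Z}=u_{r+1}$.

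I would then split into two cases. If $k_{Z}\geq u_{r}$, the inductive hypothesis gives $A^{k_{Z}}N^{r}\subset N^{r+1}$, placing the term in $[A,\cdots,A,N,N^{r+1}]=N^{r+2}$. Otherwise $k_{Z}\leq u_{r}-1$, so $k_{1}+\cdots+k_{n-2}+k_{N}\geq (n-1)(m-1)+1$, and pigeonhole over the $n-1$ remaining indices yields some $\ell\in\{1,\ldots,n-2,N\}$ with $k_{\ell}\geq m$; the corresponding slot content then lies in $A^{m+1}\subset N^{2}$ (using $A^{k_{N}}N\subset A^{k_{N}+1}$ when $\ell=N$). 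Using antisymmetry to move the $N^{2}$-valued slot to the last position and applying Filippov-Jacobi once more, for $\ell\in\{1,\ldots,n-2\}$ each resulting sub-term can be brought into the form $[A,\cdots,A,N,N^{r+1}]$ by absorbing extra $N$-factors into $A$-slots via $N\subset A$.

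The main technical obstacle will be the sub-case $\ell=N$: the naive bound on the term $[A,\cdots,A,N^{2},A^{k_{Z}}N^{r}]$ only places it in $N^{r+1}$. To close the gap, I would use the finer inclusion $A^{k_{N}}N\subset A^{k_{N}+1}=[A,\cdots,A,A^{k_{N}}]$ and apply Filippov-Jacobi iteratively to transfer the internal $A$-factors out of the $N$-slot and onto the $N^{r}$-slot; combined with the lower bound $k_{N}+k_{Z}\geq u_{r}$ that the pigeonhole count forces in this sub-case, this reduces the analysis back to the first case via the inductive hypothesis, and completes the induction.
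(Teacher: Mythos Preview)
Your overall scheme matches the paper's: induct on $r$, verify $A^{m}N\subset A^{m+1}\subset N^{2}$ for the base, and for the step distribute the $u_{r+1}$ outer applications across the $n$ slots of $N^{r+1}$, then split on whether the exponent landing on the $N^{r}$-slot reaches $u_{r}$ or a pigeonhole forces some other slot into $N^{2}$. The paper invokes Theorem~\ref{t2.911} (the Leibniz formula for iterated derivations) to justify exactly this distribution, writing
\[
A^{s}[N^{r},N,A,\ldots,A]=\sum_{s_{1}+\cdots+s_{n}=s}[A^{s_{1}}N^{r},\,A^{s_{2}}N,\,A^{s_{3}}A,\ldots,A^{s_{n}}A],
\]
and its two cases $s_{1}\ge u$ versus $s_{1}<u$ are your Case~1 and Case~2.

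Where you diverge is in the sub-case $\ell=N$, and here you have manufactured an obstacle that is not there. The Filippov--Jacobi expansion of $N^{2}$ that you apply for $\ell\in\{1,\ldots,n-2\}$ works verbatim for $\ell=N$ as well. Concretely, when $\ell=N$ the term lies in $[A,\ldots,A,N^{2},N^{r}]$; by antisymmetry this is $\pm[A,\ldots,A,N^{r},N^{2}]$, and expanding $N^{2}=[N,\ldots,N]$ yields terms of the shape
\[
[N,\ldots,[A,\ldots,A,N^{r},N],\ldots,N].
\]
The inner bracket already sits in $[A,\ldots,A,N,N^{r}]=N^{r+1}$, so each term lies in $[N,\ldots,N,N^{r+1}]\subseteq[A,\ldots,A,N,N^{r+1}]=N^{r+2}$. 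This is precisely how the paper treats all indices $k\ge 2$ at once, with no separate sub-case. Your proposed workaround of ``transferring internal $A$-factors from the $N$-slot onto the $N^{r}$-slot'' would be delicate to make rigorous (once the Leibniz distribution has been performed you cannot simply redistribute selected exponents between two slots), and in any event it is unnecessary: drop it and use the direct expansion above.
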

\begin{proof} We have induction on $r.$ If $r=1,$ then $A^{(1-1)(n-1)(m-1)+m}N^{1}=A^{m}N\subseteq A^{m+1}\subset
N^{2}$ and the base case is satisfied. We now assume that the result
holds for $r$ and consider $r+1.$

Let $s=r(n-1)(m-1)+m$ and $u=(r-1)(n-1)(m-1)+m.$ By Theorem $\ref{t2.911},$ we obtain
$$A^{s}N^{r+1}=A^{s}[N^{r},N,A,\cdots,A]=\sum_{s_{1}+\cdots+s_{n}=s}[A^{s_{1}}N^{r}, A^{s_{2}}N, A^{s_{3}}A,\cdots, A^{s_{n}}A].$$
Suppose $s_{1}\geq u.$ Then by the induction hypothesis,
$A^{s_{1}}N^{r}\subset N^{r+1}$ and
$$\sum_{s_{1}+\cdots+s_{n}=s}[A^{s_{1}}N^{r}, A^{s_{2}}N, A^{s_{3}}A,\cdots, A^{s_{n}}A]\subset [N^{r+1},N,A,\cdots,A]\subset N^{r+2}.$$

Suppose $s_{1}<u.$ We claim there exists $s_{k}\geq m.$ Assume to
the contrary that $s_{j}<m$ for all $j.$ We obtain
$s=(s_{1})+(s_{2}+\cdots+s_{n})<u+(n-1)(m-1)=(r-1)(n-1)(m-1)+m+(n-1)(m-1)=r(n-1)(m-1)+m=s.$
But this is impossible. Hence there exists $s_{k}\geq m$ for some
$k.$ As a result $A^{s_{k}}N\subset N^{2}$ and using the
Filippov-Jacobi identity and skew super-symmetry, we obtain
\begin{eqnarray*}&& [A^{s_{1}}N^{r}, A^{s_{2}}N, A^{s_{3}}A,\cdots, A^{s_{k}}A,\cdots, A^{s_{n}}A]\\
&&= [N^{r},N,A,\cdots,A,N^{2},A,\cdots,A]\\
&&= [N^{r},N,A,\cdots,A,A,\cdots,A,N^{2}]\\
&&=[N^{r},N,A,\cdots,A,A,\cdots,A,[N,\cdots,N]]\\
&&=[[N^{r},N,A,\cdots,A,N,],N,\cdots,N]+[N,[N^{r},N,A,\cdots,A,N,],N,\cdots,N]\\
&&\quad+\cdots+[N,\cdots,N,[N^{r},N,A,\cdots,A,N,]]\\
&&\subseteq [N^{r+1},N,N,\cdots,N]\\
&&\subseteq [N^{r+1},N,A,\cdots,A]\\
&&=N^{r+2}.
\end{eqnarray*} This proves the lemma.
\end{proof}

\begin{theorem} \label{l2.801} Let $A$ be an $n$-Lie superalgebra with $N\lhd A.$  If $N^{t+1}=0$ and $(A/N^{2})^{m+1}=0,$ then
$cl(A)\leq tm+\frac{1}{2}t(t-1)(m-1)(n-1).$
\end{theorem}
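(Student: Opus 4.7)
The plan is to convert the hypothesis $(A/N^{2})^{m+1}=0$ into $A^{m+1}\subseteq N^{2}$ (taking $m$ minimal so that Lemma \ref{l2.911} applies), and then to climb the filtration $N\supseteq N^{2}\supseteq\cdots\supseteq N^{t+1}=0$ one rung at a time by iterating that lemma. Writing $u_{r}:=(r-1)(n-1)(m-1)+m$, Lemma \ref{l2.911} reads $A^{u_{r}}N^{r}\subseteq N^{r+1}$ for each $r\geq 1$, and my goal is to prove by induction on $r\in\{1,\ldots,t\}$ the inclusion
\begin{equation*}
A^{\,m+1+u_{2}+u_{3}+\cdots+u_{r}}\subseteq N^{r+1}.
\end{equation*}

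The base case $r=1$ (with an empty sum) is just the translated hypothesis $A^{m+1}\subseteq N^{2}$. For the inductive step I would use two elementary facts that follow directly from the recursive definition $A^{j}W=[A,\ldots,A,A^{j-1}W]$: monotonicity ($W\subseteq W'$ implies $A^{j}W\subseteq A^{j}W'$), and the identity $A^{j}(A^{s})=A^{j+s}$ for the lower central series. Applying $A^{u_{r+1}}(\cdot)$ to both sides of the inclusion at stage $r$ and then invoking Lemma \ref{l2.911} with parameter $r+1$ yields
\begin{equation*}
A^{\,m+1+u_{2}+\cdots+u_{r+1}}=A^{u_{r+1}}\bigl(A^{\,m+1+u_{2}+\cdots+u_{r}}\bigr)\subseteq A^{u_{r+1}}N^{r+1}\subseteq N^{r+2},
\end{equation*}
completing the induction.

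Setting $r=t$ and using $N^{t+1}=0$ gives $A^{\,m+1+u_{2}+\cdots+u_{t}}=0$, hence
\begin{equation*}
cl(A)\leq m+\sum_{r=2}^{t}u_{r}=m+(t-1)m+(n-1)(m-1)\binom{t}{2}=tm+\tfrac{1}{2}t(t-1)(m-1)(n-1),
\end{equation*}
which is the asserted bound. The only real obstacle is bookkeeping: coordinating the two indices (depth in the $N$-filtration and number of outer $A$-bracketings) through the induction, and then checking that the telescoping sum $\sum_{r=2}^{t}(r-1)=\binom{t}{2}$ combines with the $(t-1)m$ contribution and the leading $m$ to give exactly the stated closed form. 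No structural input beyond Lemma \ref{l2.911} and the recursive definition of the lower central series is required.
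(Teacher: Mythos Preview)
Your proof is correct and follows essentially the same approach as the paper: both convert the hypothesis to $A^{m+1}\subseteq N^{2}$, then repeatedly apply Lemma~\ref{l2.911} to step down the chain $N^{2}\supseteq N^{3}\supseteq\cdots\supseteq N^{t+1}=0$, and finally sum the exponents $m+1,u_{2},\ldots,u_{t}$ to obtain $A^{\omega}=0$ with $\omega=tm+\tfrac{1}{2}t(t-1)(m-1)(n-1)+1$. Your write-up is in fact more explicit than the paper's (which simply says ``adding the exponents'') in spelling out the induction and the monotonicity and composition identities for $A^{j}(\cdot)$.
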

\begin{proof}
 Using Lemma $\ref{l2.911},$ we observe that $A^{m+1}\subset N^{2}, A^{m+(n-1)(m-1)}N^{2}\subset N^{3},\cdots,$ $A^{m+(t-1)(n-1)(m-1)}N^{t}\subset N^{t+1}=0.$
Adding the exponents on the left-hand side, we see that
$A^{\omega}=0,$ where $\omega=tm+\frac{1}{2}t(t-1)(m-1)(n-1)+1,$ The
proof is complete.
\end{proof}

\begin{definition} \label{l2.901} Let $A$ be a nonzero
$n$-Lie superalgebra and $S$ be a subset of $A$ such that
$S\supseteq \{0\}.$ The normal closure of $S$ in $A,$ $S^{A},$ is
the smallest ideal in $A$ containing $S.$
\end{definition}

\begin{theorem} \label{t2.801} Let $A$ be a nonzero
$n$-Lie superalgebra over $\mathbb{F}.$  Then

$\mathrm{(i)}$  If $A$ satisfies condition $\ast\ast,$ then there
exists a nonzero nilpotent subalgebra $N$ in $A$ such that
$N^{A}=A.$

$\mathrm{(ii)}$ $A$ is nilpotent if and only if the subalgebra $N$
in $\mathrm{(i)}$ is $A.$
\end{theorem}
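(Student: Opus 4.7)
My plan is to prove part (i) by induction on $\dim A$, reducing to a proper Fitting-$0$ subalgebra of a non-nilpotent left multiplication, and to derive part (ii) as an immediate corollary. Part (i) is the substantive statement.

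For (i), if $A$ is nilpotent I would simply take $N=A$, which trivially works. Otherwise, by the contrapositive of Engel's theorem (Theorem \ref{t1.1}), there exist $a_1,\ldots,a_{n-1}\in A$ for which $D(a):=D(a_1,\ldots,a_{n-1})$ is not nilpotent. I would then set $B:=A_0(D(a))$, its Fitting-$0$ component. Using the derivation-expansion formula of Theorem \ref{t2.911} applied to $D(a)$, one checks that $B$ is a subalgebra of $A$. It is proper because $A_1(D(a))\neq 0$, and nonzero because condition $\ast\ast$ places some $a_i$ in $B$, and $a_i\neq 0$ (otherwise $D(a)=0$ would be nilpotent). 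Next I would verify that $B$ inherits condition $\ast\ast$: for any $b_1,\ldots,b_{n-1}\in B$, the Fitting-$0$ component of $D(b_1,\ldots,b_{n-1})$ on the invariant subspace $B$ equals $A_0(D(b_1,\ldots,b_{n-1}))\cap B$, and condition $\ast\ast$ in $A$ puts some $b_i$ into this intersection. By the inductive hypothesis, $B$ contains a nonzero nilpotent subalgebra $N$ with $N^B=B$.

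The crux is then to upgrade $N^B=B$ to $N^A=A$. I would first observe that $N^A\cap B$ is an ideal of $B$ (since $B$ is a subalgebra of $A$ and $N^A$ is an ideal of $A$) containing $N$, so by minimality of $N^B$ we obtain $B=N^B\subseteq N^A$. In particular the index $i$ provided by condition $\ast\ast$ gives $a_i\in B\subseteq N^A$. Using skew-supersymmetry to move $a_i$ into the last slot of the bracket, the ideal property of $N^A$ yields $[a_1,\ldots,a_{n-1},x]\in N^A$ for every $x\in A$, so $D(a)(A)\subseteq N^A$. Since $D(a)$ restricts to an automorphism of the Fitting-$1$ component (Remark \ref{r2.11}), $A_1(D(a))=D(a)(A_1(D(a)))\subseteq N^A$, and combined with $A_0(D(a))=B\subseteq N^A$ the Fitting decomposition $A=A_0(D(a))\oplus A_1(D(a))$ gives $N^A=A$.

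Part (ii) is then immediate: the ``if'' direction uses $N=A$ as a witness in (i), while the ``only if'' direction follows because $N$ is required to be nilpotent, so $N=A$ forces $A$ to be nilpotent. The main obstacle I anticipate is precisely the passage from $N^B=B$ to $N^A=A$; condition $\ast\ast$ is indispensable here, since it is exactly what guarantees that some argument of the non-nilpotent $D(a)$ already lies in $N^A$, and this in turn drags the Fitting-$1$ component $A_1(D(a))=D(a)(A_1(D(a)))$ into $N^A$ and completes the ideal closure.
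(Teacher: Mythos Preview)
Your argument for (i) is correct and takes a somewhat more direct route than the paper. The paper invokes Theorem~\ref{t2.151}(i) to produce, when $A$ is non-nilpotent, a maximal subalgebra $M$ that is \emph{not} an ideal; after obtaining $N^{M}=M$ by induction one has $N^{A}\supseteq M$, and since $N^{A}$ is an ideal while $M$ is not, $N^{A}\supsetneq M$, so maximality of $M$ gives $N^{A}=A$. You bypass Theorem~\ref{t2.151}(i) and work directly with the Fitting-$0$ component $B=A_0(D(a))$, then use $a_i\in B\subseteq N^{A}$ together with $A_1(D(a))=D(a)(A_1(D(a)))\subseteq N^{A}$ to absorb the Fitting-$1$ part. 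This is essentially the mechanism behind Theorem~\ref{t2.151}(i) unwrapped in place. One small point: for $B$ to be a graded subspace (and hence a subalgebra in the super sense) you want $D(a)$ to be homogeneous; you can arrange this by choosing the $a_i$ homogeneous, which is legitimate because Theorem~\ref{t1.2} applied to the h.m.c.\ set $A_{\bar 0}\cup A_{\bar 1}$ shows that if all $D(a)$ with homogeneous entries were nilpotent then $A$ would be nilpotent.

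For (ii) your treatment is incomplete. First, your labels for the two implications are interchanged. More importantly, the paper reads (ii) as saying that when $A$ is nilpotent, \emph{every} nonzero nilpotent subalgebra $N$ with $N^{A}=A$ must equal $A$, not merely that $N=A$ is one admissible choice. That direction is not automatic: the paper argues via Theorem~\ref{t2.151}(ii) that any proper $N$ lies in some maximal subalgebra $M$, which in a nilpotent $A$ is an ideal, whence $N^{A}\subseteq M\subsetneq A$, a contradiction. Your phrase ``uses $N=A$ as a witness in (i)'' only establishes that $N=A$ is a valid choice, not the unique one, so under the paper's reading this direction still needs the argument through Theorem~\ref{t2.151}(ii).
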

\begin{proof}
 $\mathrm{(i)}$ If $A$ is nilpotent, then we may take $N=A$ and $N^{A}=A^{A}=A.$ Consider the case that $A$ is not nilpotent. We use induction on the dimension of $A.$
A non-nilpotent $n$-Lie superalgebra of lowest dimension is two
dimensional, namely, $A=A_{\bar{0}}\oplus
A_{\bar{1}}(A_{\bar{0}}=\mathbb{F}x, A_{\bar{1}}=\mathbb{F}y)$ with
a bilinear skew super-symmetric bracket multiplication $[x,x,y]=y$
defined on $A.$ The normal closure of the one dimensional subalgebra
$\mathbb{F}x$ is $L.$ Assume that the theorem holds for all
non-nilpotent $n$-Lie superalgebras which dimension is less than
$n.$ Consider the case that $A$ is an $n$-dimensional non-nilpotent
$n$-Lie superalgebra. Then by Theorem $\ref{t2.151}$ (i), there
exists a maximal subalgebra $M$ in $A$ such that $M$ is not an ideal
in $A.$ Since the dimension of $M$ is less than $n,$ by our
inductive hypothesis there exists a nilpotent subalgebra $N$ in $M$
such that $N^{M}=M.$ We claim that $N^{A}\supseteq M.$ Since $N^{A}$
is an ideal in $A,$ $[A,\cdots, A, N^{A}]\subseteq N^{A}.$ In
particular, $[M,\cdots, M, N^{A}]\subseteq N^{A}.$ Since $M$ is a
subalgebra, $[M,\cdots, M, N^{A}\cap M]\subseteq N^{A}\cap M$ and
$N^{A}\cap M$ is an ideal in $M$ containing $N.$ Since $N^{M}$ is
the smallest ideal in $M$ containing $N,$ we have $N^{A}\cap
M\supseteq N^{M},$ i.e., we have $N^{A}\supseteq N^{A}\cap
M\supseteq N^{M}=M.$ Since $M$ is not an ideal of $A$ and $N^{A}$ is
an ideal of $A,$ $N^{A}\supset M.$ $N^{A}=A$ follow from $M$ being a
maximal subalgebra in $A.$

$\mathrm{(ii)}$ If $A=N$ and $N$ is nilpotent, $A$ is nilpotent.
Conversely, suppose $\{0\}\neq N\neq A.$ Then either $N$ is a
maximal subalgebra of nilpotent $n$-Lie superalgebra $A$ or $N$ is
contained in a maximal subalgebra $M$ of $A.$ By Theorem
$\ref{t2.151}$ (ii), every maximal subalgebra in $A$ is an ideal,
$N^{A}\subseteq M\neq A.$ That is a contradiction. Hence $N=A.$
 The proof is complete.
\end{proof}

\end{document}